\DeclareSymbolFont{bbold}{U}{bbold}{m}{n}
\DeclareMathSymbol{\numberone}{\mathord}{bbold}{`1}
\newcommand{\bbZ}{\mathbb Z}
\newcommand{\mcC}{\mathcal C}
\newcommand{\mcE}{\mathcal E}
\newcommand{\mcH}{\mathcal H}
\newcommand{\mcI}{\mathcal I}
\newcommand{\mcK}{\mathcal K}
\newcommand{\mcU}{\mathcal U}
\newcommand{\Set}{\mathsf{Set}}
\newcommand{\Top}{\mathsf{Top}}
\newcommand{\Aut}{\mathop\mathsf{Aut}\nolimits}
\newcommand{\Hfib}{\mcH\textrm{-}\mathsf{fib}}
\newcommand{\map}{\mathop\mathsf{map}\nolimits}
\newcommand{\op}{\mathrm{op}}
\newcommand{\xra}[1]{\xrightarrow{#1}}
\newcommand{\ra}{\rightarrow}
\newcommand{\la}{\leftarrow}
\newcommand{\da}{\downarrow}
\newcommand{\xlra}[1]{\xrightarrow{\ #1\ }}
\newcommand{\lra}{\longrightarrow}
\newcommand{\pbscale}{.25}
\newcommand{\pboffset}{.5}
\newcommand{\hpbsize}{15pt}
\newcommand{\hpboffset}{.5}
\newcommand{\xycorner}[3]{\save [];#2**{}?(\pbscale)="a",[];#1**{}?(\pbscale);"a"**{}?(\pboffset);"a"**\dir{-},[];#3**{}?(\pbscale);"a"**{}?(\pboffset);"a"**\dir{-} \restore}
\newcommand{\xyhcorner}[3]{\save #2="a";#1;"a"**{}?(\hpboffset);"a"**\dir{-};#3;"a"**{}?(\hpboffset);"a"**\dir{-};[]**{}?(.3)*{\scriptstyle h} \restore}
\newcommand{\corner}[3]{\xycorner{[#1]}{[#2]}{[#3]}}
\newcommand{\pb}[1][{1}]{\xycorner{[r]**{}?(#1)}{[dr]**{}?(#1)}{[d]**{}?(#1)}}
\newcommand{\hpo}{\xyhcorner{[]+<-\hpbsize,0pt>}{[]+<-\hpbsize,\hpbsize>}{[]+<0pt,\hpbsize>}}
\newcommand{\colim}{\mathop\mathrm{colim}}
\newcommand{\fib}{\mathop\mathrm{fib}\nolimits}
\newcommand{\hocolim}{\mathop\mathrm{hocolim}}
\newcommand{\hofib}{\mathop\mathrm{hofib}\nolimits}
\newcommand{\id}{\mathrm{id}}
\newcommand{\interior}{\mathop\mathrm{int}}
\newcommand{\sk}{\mathop\mathrm{sk}\nolimits}
\newlength{\hlp}
\newcommand{\rightbox}[2]{\settowidth{\hlp}{$#2$}\makebox[\hlp][r]{${#1}{#2}$}}
\theoremstyle{plain}
\newtheorem{theorem}{Theorem}
\newtheorem{corollary}[theorem]{Corollary}
\newtheorem{proposition}[theorem]{Proposition}
\newtheorem*{slemma}{Lemma}
\newtheorem*{scorollary}{Corollary}
\theoremstyle{definition}
\newtheorem*{sexamples}{Examples}
\newtheorem*{sdefinition}{Definition}
\newtheorem*{sassumption}{Assumption}
\theoremstyle{remark}
\newtheorem*{remark}{Remark}
\begin{document}

\title{Fibrations up to an equivalence, homotopy colimits and pullbacks}
\author{Luk\'a\v{s} Vok\v{r}\'inek}
\address{Department of Mathematics and Statistics\\Masaryk University\\Kotl\'a\v rsk\'a 2\\611 37 Brno\\Czech Republic}
\email{koren@math.muni.cz}
\thanks{Research was supported by the grant MSM 0021622409 of the Czech Ministry of Education and by the grant 201/08/0397 of the Grant Agency of the Czech Republic.}
\keywords{}
\subjclass[2000]{55R65, 55R15}
\begin{abstract}
We gather conditions on a class $\mcH$ of continuous maps of topological spaces that allow a reasonable theory of fibrations up to an equivalence (a~map from this class) which we call $\mcH$-fibrations. The weak homotopy equivalences recover quasifibrations and homology equivalences yield homology fibrations. We study local $\mcH$-fibrations that behave nicely with respect to homotopy colimits together with universal $\mcH$-fibrations that behave nicely with respect to pullbacks. We then proceed to classify $\mcH$-fibrations up to a natural notion of equivalence.
\end{abstract}

\maketitle

Quasifibrations were invented by Dold and Thom in their study \cite{DoldThom} of the free commutative topological monoid $\mathrm{SP}(X)$ on a pointed topological space $X$. They used them to show that on connected spaces this functor turns homology groups of $X$ into homotopy groups of $\mathop{\mathrm{SP}}(X)$ by studying the effect of $\mathrm{SP}$ on cofibration sequences. One can express the main argument roughly by saying that quasifibrations behave nicely with respect to homotopy colimits. This is exactly the same principle which enabled Quillen in \cite{Quillen} to prove his Theorems A and B. Similar properties of (local) homology fibrations were exhibited by McDuff and Segal in \cite{McDuffSegal} in their proof of the group completion theorem.


In this paper we consider more generally fibrations ``up to an equivalence'', natural generalizations of quasifibrations and homology fibrations. We impose a reasonable set of conditions on the class $\mcH$ of such equivalences which guarantee nice properties of the class of $\mcH$-fibrations (i.e.~fibrations up to an equivalence from the class $\mcH$). Although we prove many useful results about them the main achievement lies perhaps in establishing this set of conditions on $\mcH$.

As in the above mentioned papers there are variants of $\mcH$-fibrations, which we call local $\mcH$-fibrations, that ensure stability under homotopy colimits but here we are more concerned with stability under pullbacks which is used in the forthcoming paper \cite{Vokrinek} about an extension of an h-principle \cite{Vassiliev} of Vassiliev. In our notation one could more precisely call the result an $\mcH$-principle, here for the class $\mcH$ of homology equivalences.

In general it is not true that a pullback of an $\mcH$-fibration is again an $\mcH$-fibration, this desirable property is already violated by the most classical case of quasifibrations. We need again a slight variation which we call a universal $\mcH$-fibration. This is a map that becomes an $\mcH$-fibration whenever pulled back along a map from a disc to the base. Our main result is that these are precisely $\mcH$-fibrations stable under pullbacks (most importantly they indeed \emph{are} $\mcH$-fibrations).

In the second part we propose a geometric condition on a map (revolving around lifting paths as in \cite{Stasheff2}) and prove that it makes the map a universal $\mcH$-fibration. In the case of universal homology fibrations we obtain a more concrete criterion as well as for universal quasifibrations.

The third part is devoted to a classification problem for $\mcH$-fibrations. This is a version of the classification theorems of Stasheff (\cite{Stasheff1}), May (\cite{May}) and others. The answer is unsatisfactory in that we fail to show that the relevant equivalence classes of $\mcH$-fibrations with a fixed ($\mcH$-equivalence class of) fibre form only a set\footnote{We believe that in general there might be a proper class of these equivalence classes.}. Assuming that this is so we prove that a classifying space exists. We give a condition (which is satisfied in most of our examples) under which we are able to identify the representing object as the classifying space of a topological monoid of self-homotopy equivalences of a certain ``localization'' of the fibre in question.

\setcounter{section}{-1}
\section{The conditions on $\mcH$ and a few examples}

Let $p:E\ra B$ be a map and $b\in B$. We denote by $\fib_bp=p^{-1}(b)$ the topological fibre and by $\hofib_bp=E\times_B\map_*(I,B)$ the homotopy fibre of $p$ over $b$.

We fix a class $\mcH\subseteq\Top$ of maps with the following properties

\begin{itemize}
\item[(0)]
$\mcH$ contains all weak equivalences.

\item[(1)]
The retract axiom: $\mcH$ is closed under retracts.

\item[(2)]
The 2-out-of-3 axiom: for composable maps $X\xra{f}Y\xra{g}Z$ the following holds: If $f\in\mcH$ then $g\in\mcH\Leftrightarrow gf\in\mcH$. If $g$ is a weak equivalence then $f\in\mcH\Leftrightarrow gf\in\mcH$.

\item[(3)]
The extension axiom: if in the diagram
\[\xymatrix@C=10pt{
E' \ar[rr] \ar[rd]_{p'} & & E \ar[ld]^p \\
& B
}\]
all the canonical maps $\hofib_bp'\ra\hofib_bp$ lie in $\mcH$ (for all choices of the basepoint) then so does the map $E'\ra E$.

\item[(4)]
The homotopy colimit axiom: suppose that $\mcI$ is a category with $B\mcI\simeq *$ and $D:\mcI\ra\mcH$ a diagram. Then all the structure maps $Di\ra\hocolim D$ also lie in $\mcH$.

\end{itemize}

We also say that $f$ is an \emph{$\mcH$-equivalence} in place of $f\in\mcH$. We now state certain strengthenings of axioms (2)-(4). These will \emph{not} be assumed in the paper unless said explicitly.

\begin{itemize}
\item[($2^+$)]
The classical 2-out-of-3 axiom: if any two of $f,g,gf$ are $\mcH$-equivalences then so is the third.

\item[($3^+$)]
If in the diagram
\[\xymatrix{
E' \ar[r]^f \ar[d]_{p'} & E \ar[d]^p \\
B' \ar[r]_g & B
}\]
the base map $g$ and all the canonical maps $\hofib_{b'}p'\ra\hofib_{f(b')}p$ are $\mcH$-equivalences (for all choices of the basepoint $b'$) then so is $f$.

\item[($4^+$)]
In addition to (4) in any homotopy pushout square
\[\xymatrix{
A \ar[r]^f \ar[d] & X \ar[d] \\
B \ar[r]_g & Y \hpo
}\]
if $f$ belongs to $\mcH$ then so does $g$. We also say that $\mcH$ is left proper.

\end{itemize}

\begin{sexamples}\hfill
\begin{itemize}
\item[a)]
Homology equivalences: properties (0)-(2) are obvious, (3) follows from the Serre spectral sequence while (4) from the homotopy colimit spectral sequence. The same works for any generalized homology theory.

\item[b)]
Weak homotopy equivalences: (0)-(3) are obvious. To prove (4) consider the diagram $D\langle 1\rangle$ of universal covers. In the diagram
\[\xymatrix{
\pi \ar[r] \ar@{=}[d] & \hocolim D\langle 1\rangle \ar[r] & \hocolim D \\
\pi \ar[r] & Di\langle 1\rangle \ar[r] \ar[u] & Di \ar[u]
}\]
the top row is a fibration sequence with $\hocolim D\langle 1\rangle$ simply connected by an easy geometric argument. Hence the middle arrow is a weak equivalence by the previous point and consequently so is the right arrow.

\item[c)]
Maps inducing isomorphism on $\pi_j$ for all $j\leq n$: only (4) needs a comment. Consider the functor $X\mapsto X[n]$ which kills all the homotopy groups $\pi_j$ for $j>n$, the $n$-th Postnikov section. The diagram $D[n]$ consists of weak homotopy equivalences and by the previous example $\hocolim D[n]\simeq Di[n]$. Also $\hocolim D[n]$ is obtained from $\hocolim D$ by attaching cells of dimension at least $n+2$ and thus $\pi_j(\hocolim D)\cong\pi_j(\hocolim D[n])\cong\pi_j(Di)$ for all $j\leq n$. This example violates ($3^+$).

\item[d)]
$(n+1)$-connected maps: again we explain why (4) holds. For $n=-1$ there is a simple argument, for $n\geq0$ reduce to the case of a diagram of connected spaces by decomposing into connected components and consider the functor $X\mapsto X\langle n\rangle$ of the $n$-connected cover. We have a fibration sequence $\Omega X[n]\ra X\langle n\rangle\ra X\ra X[n]$ of functors yielding a fibration sequence
\[\Omega D[n]\ra D\langle n\rangle\ra D.\]
By Corollary~\ref{corollary_gluing_property} applied to the class of weak equivalences there is an induced fibration sequence
\[\Omega Di[n]\ra\hocolim D\langle n\rangle\ra\hocolim D.\]
On $\pi_{n+1}$ we have a diagram
\[\xymatrix{
\pi_{n+1}(\hocolim D\langle n\rangle) \ar[r]^\cong & \pi_{n+1}(\hocolim D) \\
\pi_{n+1}(Di\langle n\rangle) \ar[r]_\cong \ar@{->>}[u] & \pi_{n+1}(Di) \ar[u]
}\]
where surjectivity follows for $n>0$ from an easy argument with the homotopy colimit spectral sequence while the case $n=0$ can be treated directly. This example violates ($2^+$).

\item[e)]
Acyclic maps, i.e.~maps inducing isomorphism in homology with local coefficients or, equivalently, maps whose homotopy fibre has singular homology of a point: the only axiom not covered in \cite{HH} is again number (4) which is just a homological (with local coefficinets) spectral sequence for the homotopy colimit.

\item[f)]
Maps whose all homotopy fibres belong to a fixed closed class $\mcC$ which is closed under extensions by fibrations: we follow \cite{Chacholski}, see also \cite{Farjoun}. Axiom (0) is part of the definition, (1) is almost trivial (a retract of a space $X$ can be written as a sequential homotopy colimit $X\ra X\ra\cdots$). Axiom (2) follows from the fibration sequence
\[\hofib_bf\ra\hofib_{g(b)}gf\ra\hofib_{g(b)}g\]
and Corollary 6.5 of \cite{Chacholski}. The same argument shows that the homotopy fibres of $\hofib_bp'\ra\hofib_bp$ are the same as those of $E'\ra E$ proving (3). The proof of (4) is more complicated and goes roughly this way: according to Theorem~9.4.~of \cite{Chacholski} it holds for the pushout category (in fact the left properness of ($4^+$) holds) and it is also not difficult to prove (4) for diagrams indexed over ordinals since the homotopy fibre of a transfinite composition is a homotopy colimit of the homotopy fibres of the partial compositions and these lie in $\mcC$ by the fibration sequence above. The general case then follows since homotopy colimits over categories with contractible nerve are generated by these two kinds of homotopy colimits.
\end{itemize}
\end{sexamples}

\section{General theory of $\mcH$-fibrations}

\begin{sdefinition}
A map $p:E\ra B$ is called an \emph{$\mcH$-fibration} if for all points $b\in B$ the canonical inclusion $\fib_bp\ra\hofib_bp$ is an $\mcH$-equivalence.
\end{sdefinition}

%
%

\begin{sdefinition}
A map $p:E\ra B$ is called a \emph{universal $\mcH$-fibration} if for all maps $\sigma:D^n\ra B$ the map $\hat{p}$ in the pullback square
\[\xymatrix{
\sigma^*E \pb \ar[r] \ar[d]_{\hat{p}} & E \ar[d]^p \\
D^n \ar[r]^\sigma & B
}\]
is an $\mcH$-fibration.
\end{sdefinition}

\begin{remark}
Equivalently, one could ask the same condition for cones on all finite simplicial complexes rather than just discs. As every such cone retracts off a disc the equivalence follows from a readily verified fact that $\mcH$-fibrations are closed under retracts. In fact we could have left out the retract axiom if we replaced all discs by cones as this is the only way in which the retract axiom is used in this paper.
\end{remark}

An easy but equally important observation is that the notion of a universal $\mcH$-fibration is closed under pullbacks. Our main result on universal $\mcH$-fibrations, Corollary~\ref{corollary_main}, asserts that they are always $\mcH$-fibrations and then clearly precisely those stable under pullbacks. This should explain the name universal.

\begin{sdefinition}
A map $p:E\ra B$ is called a \emph{local $\mcH$-fibration} if there exists a basis for topology of $B$ consisting of open subsets $U$ over which the map $p$ is an $\mcH$-fibration, i.e.~such that $p^{-1}(U)\ra U$ is an $\mcH$-fibration.
\end{sdefinition}

The local $\mcH$-fibrations over a locally contractible base are exactly those $\mcH$-fibrations stable under restrictions to open subsets as the following theorem shows. We recall from \cite{DuggerIsaksen} that a cover $\mcU$ of a topological space $B$ is called complete if the intersection $U\cap V$ of any $U,V\in\mcU$ can be expressed as a union of elements of $\mcU$.

\begin{theorem}\label{theorem_local_to_global_hurewicz}
Let $p:E\ra B$ be a continuous map
. Suppose that $B$ admits a complete cover by contractible open subsets $U$ over which $p$ is an $\mcH$-fibration (i.e.~such that $(\fib_bp\ra p^{-1}(U))\in\mcH$). Then $p$ itself is an $\mcH$-fibration.
\end{theorem}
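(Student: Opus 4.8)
The plan is to reduce the global statement to the local hypothesis by a Mayer--Vietoris / homotopy colimit argument, using axiom (4) together with the already-proven compatibility of $\mcH$-fibrations with homotopy colimits encoded in the machinery preceding Corollary~\ref{corollary_gluing_property}. Fix $b\in B$; we must show that $\fib_bp\ra\hofib_bp$ lies in $\mcH$. Since homotopy fibres only see a neighbourhood of $b$ up to homotopy, and the cover is complete by contractible opens, the idea is to realize $B$ (or at least a neighbourhood basis at $b$) as a homotopy colimit of the contractible pieces $U$ on which $p$ is already an $\mcH$-fibration, and then to transport the $\mcH$-fibration property along this decomposition.

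First I would set up the diagram. Let $\mcU$ be the given complete cover by contractible opens over which $p$ is an $\mcH$-fibration. Completeness is exactly the condition that makes the poset (or \v Cech) category $\mcI$ of finite intersections behave well: every intersection $U\cap V$ is again a union of members of $\mcU$, so one gets a genuine diagram $D:\mcI\ra\Top$ with $\hocolim D\simeq B$ (this is the Dugger--Isaksen observation we are allowed to cite), and likewise a diagram $DE:\mcI\ra\Top$ of the restricted total spaces with $\hocolim DE\simeq E$, the map $p$ being the map of homotopy colimits induced by the levelwise restrictions $p^{-1}(U)\ra U$. Over each object $U$ of $\mcI$ (a contractible open, resp.\ a union of such) the restricted map is an $\mcH$-fibration: for the basic opens by hypothesis, and for the intersections one uses the local hypothesis again after observing that an intersection is covered by members of $\mcU$, i.e.\ one iterates the construction. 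Because each such $U$ is contractible, $\hofib_bp|_U\simeq\fib_bp|_U$ for every $b$, so on each object of $\mcI$ the total space $p^{-1}(U)$ is $\mcH$-equivalent, over the contractible $U$, to its fibre.

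Next I would invoke the homotopy colimit axiom. The category $\mcI$ of nonempty finite intersections of a cover has contractible nerve (after restricting to a contractible base $U_0\ni b$ from the neighbourhood basis, $B\mcI\simeq U_0\simeq *$), so axiom (4) — or rather the fibrewise strengthening recorded in Corollary~\ref{corollary_gluing_property} applied to the class $\mcH$ — tells us that forming $\hocolim$ of a diagram of $\mcH$-fibrations over the contractible-base pieces yields an $\mcH$-fibration, and moreover the canonical comparison $\hofib_b(\hocolim DE\ra\hocolim D)\simeq\hocolim(\hofib_b DE)$ holds with the colimit structure maps in $\mcH$. Concretely: $\fib_bp=\colim_{\mcI}\fib_b(p|_U)\ra\hofib_bp$ factors through $\hocolim_{\mcI}\hofib_b(p|_U)$, and on each piece $\fib_b(p|_U)\ra\hofib_b(p|_U)$ is in $\mcH$ (that piece being an $\mcH$-fibration), so by axiom (4) the map on homotopy colimits is in $\mcH$; combining with the equivalence $\hocolim_{\mcI}\hofib_b(p|_U)\simeq\hofib_bp$ and the 2-out-of-3 axiom (2) gives $\fib_bp\ra\hofib_bp\in\mcH$, as desired.

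The main obstacle, and the step I would spend the most care on, is the bookkeeping that lets axiom (4) apply: one needs the indexing category $\mcI$ to have contractible nerve (which forces working locally at $b$ with a contractible $U_0$ from the basis, and using that the \v Cech category of a cover of a contractible space is again contractible — here completeness of the cover is what guarantees the cover of $U_0$ is well-behaved) and one needs the canonical maps $\hofib_b(\,\cdot\,)$ to actually assemble into the diagram whose homotopy colimit is $\hofib_bp$, i.e.\ a gluing/descent statement for homotopy fibres over a homotopy-colimit base. That descent statement is precisely Corollary~\ref{corollary_gluing_property} for weak equivalences (used to identify $\hocolim\hofib\simeq\hofib$) layered with axiom (4) for $\mcH$ (used to push the fibrewise $\mcH$-equivalence through the colimit); assembling these two inputs correctly, and checking the naturality needed to get an honest comparison of maps rather than just spaces, is the technical heart. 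Everything else — contractibility of the pieces, the retract axiom to pass between discs/cones and the cover, the 2-out-of-3 bookkeeping — is routine.
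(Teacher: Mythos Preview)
Your proposal has a genuine circularity: you invoke Corollary~\ref{corollary_gluing_property} to get the descent statement $\hocolim_{\mcI}\hofib_b(p|_U)\simeq\hofib_bp$, but in this paper that corollary is proved \emph{using} Theorem~\ref{theorem_local_to_global_hurewicz}. So as written the argument loops back on itself. Relatedly, your claim that the restricted map is an $\mcH$-fibration ``for the intersections one uses the local hypothesis again \ldots\ i.e.\ one iterates the construction'' is another instance of assuming the very local-to-global principle you are trying to establish.

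The paper avoids both problems by a reorganization you almost hit on but did not commit to: first reduce to the case where $B$ itself is contractible (by pulling back along $\map_*(I,B)\ra B$, so that the new total space is $\hofib_bp$ over a contractible base), and only then run the homotopy colimit argument. Once $B$ is contractible, $\hofib_bp\simeq E$, so the target is simply $\fib_bp\ra E$; no separate descent for homotopy fibres is needed. The indexing category is the poset $\mcU$ itself (not finite intersections---completeness of the cover is precisely what lets Dugger--Isaksen give $E\simeq\hocolim_{U\in\mcU}p^{-1}(U)$ and $B\simeq\hocolim_{U\in\mcU}U$ directly). Contractibility of $B\mcU$ then follows from the second equivalence since every $U$ is contractible. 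The diagram $U\mapsto p^{-1}(U)$ lands in $\mcH$ by hypothesis and 2-out-of-3, so axiom~(4) applied once gives $(p^{-1}(U)\ra E)\in\mcH$; composing with $(\fib_bp\ra p^{-1}(U))\in\mcH$ finishes. This is strictly simpler than your route and uses only axioms (0)--(4) in order, with no forward references.
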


\begin{proof}
First suppose that $B$ is contractible. Denote the cover from the statement by $\mcU$ and consider the functor $\varepsilon:\mcU\ra\Top$, $\varepsilon(U)=p^{-1}(U)$. Then \cite{DuggerIsaksen} provides
\begin{equation} \label{eqn_hocolim_decomposition}
E\simeq\hocolim\varepsilon.
\end{equation}
Observe that by the same theorem the classifying space $B\mcU$ is contractible: indeed, for the functor $\beta:\mcU\ra\Top$, $\beta(U)=U$, one gets
\[*\simeq B\simeq\hocolim\beta\simeq B\mcU\]
(as $B$ as well as every element of $\mcU$ are all contractible). As the diagram $\varepsilon$ takes place in $\mcH$ (guaranteed by the 2-out-of-3 axiom) the theorem follows from the homotopy colimit axiom.

The general case is reduced to a contractible base via the diagram
\[\xymatrix{
\hofib_bp \pb \ar[r] \ar[d] & E \ar[d] \\
\map_*(I,B) \ar[r] & B
}\]
where one needs to check that our hypotheses still hold for the left hand side map so that one can apply the previous proof. We will not need the general case and leave the details to the reader. The proof of Proposition~5 in \cite{McDuffSegal} might be useful.
\end{proof}

In particular a local $\mcH$-fibration over a CW-complex (or any locally contractible space) is an $\mcH$-fibration.

\begin{slemma}
A universal $\mcH$-fibration $p:E\ra D^n$ over a disc is a local $\mcH$-fibration.
\end{slemma}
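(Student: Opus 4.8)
The plan is to produce, for $p$, a basis for the topology of $D^n$ consisting of open sets over which $p$ is an $\mcH$-fibration. I would use the basis $\mcB$ of relatively open balls $U=B(x_0,r)\cap D^n$: these are convex, hence contractible, they form a basis of $D^n$ (arbitrarily small neighbourhoods of every point, boundary points included), and each one is an increasing union $U=\bigcup_{k\geq1}C_k$ of compact convex bodies $C_k:=\overline{B(x_0,r_k)}\cap D^n$ (with $r_k\nearrow r$), each homeomorphic to $D^n$ and satisfying $C_k\subseteq\inner(C_{k+1})$ (relative interior in $D^n$).

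A preliminary observation, used twice below, is that for any map $q\colon Y\ra X$ with $X$ weakly contractible the projection $\hofib_bq\ra Y$ is a weak equivalence: $\hofib_bq=Y\times_X\map_*(I,X)$ is the pullback along $q$ of the endpoint evaluation $\map_*(I,X)\ra X$, which is a Hurewicz fibration for every $X$, so $\hofib_bq\ra Y$ is a Hurewicz fibration whose fibre over $y$ is the (non-empty, weakly contractible) space of paths in $X$ from $b$ to $q(y)$, and a Hurewicz fibration with weakly contractible fibres is a weak equivalence. Since the composite $\fib_bq\ra\hofib_bq\ra Y$ is the inclusion $\fib_bq\hookrightarrow Y$, the $2$-out-of-$3$ axiom then shows: over a weakly contractible base, $q$ is an $\mcH$-fibration if and only if $(\fib_bq\hookrightarrow Y)\in\mcH$ for every $b$.

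Now fix $U\in\mcB$ and $b\in U$, and choose $k_0$ with $b\in C_{k_0}$. Each composite $D^n\xra{\cong}C_k\hookrightarrow D^n$ is a map from a disc along which the pullback of $p$ is the restriction $p^{-1}(C_k)\ra C_k$, which is therefore an $\mcH$-fibration by the definition of a universal $\mcH$-fibration. As $C_k$ is contractible, the observation gives $(\fib_bp\hookrightarrow p^{-1}(C_k))\in\mcH$ for all $k\geq k_0$, whence, by the $2$-out-of-$3$ axiom, also $(p^{-1}(C_k)\hookrightarrow p^{-1}(C_{k+1}))\in\mcH$. Thus $k\mapsto p^{-1}(C_k)$ is a diagram $\bbN_{\geq k_0}\ra\mcH$ over a poset with contractible nerve (it has an initial object), so the homotopy colimit axiom gives $\big(p^{-1}(C_{k_0})\ra\hocolim_kp^{-1}(C_k)\big)\in\mcH$. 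On the other hand the $C_k$ are compact, hence closed in $U$, with interiors covering $U$, so $p^{-1}(U)$ carries the colimit topology of the closed subspaces $p^{-1}(C_k)$ and the canonical map $\hocolim_kp^{-1}(C_k)\ra p^{-1}(U)$ is a weak equivalence. Composing, $(p^{-1}(C_{k_0})\hookrightarrow p^{-1}(U))\in\mcH$; composing once more with $(\fib_bp\hookrightarrow p^{-1}(C_{k_0}))\in\mcH$ yields $(\fib_bp\hookrightarrow p^{-1}(U))\in\mcH$. Since $U$ is contractible, the observation now shows $p^{-1}(U)\ra U$ is an $\mcH$-fibration; as $U$ ranged over the basis $\mcB$, $p$ is a local $\mcH$-fibration.

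The one genuinely delicate step is the identification $\hocolim_kp^{-1}(C_k)\simeq p^{-1}(U)$, i.e.\ that a sequential union along closed inclusions with covering interiors computes the homotopy colimit; this is routine point-set bookkeeping of the kind already needed in Theorem~\ref{theorem_local_to_global_hurewicz} and in Proposition~5 of \cite{McDuffSegal}, and I would simply cite those. If one prefers to sidestep it, the two appeals to the homotopy colimit axiom can be merged into a single one over the left cone on $\bbN_{\geq k_0}$, with the cone point sent to $\fib_bp$: this cone has contractible nerve since it has an initial object, and cofinality of $\bbN_{\geq k_0}$ inside it identifies the homotopy colimit with $p^{-1}(U)$.
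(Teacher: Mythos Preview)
Your argument is correct and follows exactly the route the paper takes: restrictions over closed balls are $\mcH$-fibrations by the universal property, and open balls are increasing unions of closed ones so the homotopy colimit axiom finishes. You have simply made explicit the details the paper's two-sentence proof leaves implicit (handling boundary points via relatively open balls, the reduction of the $\mcH$-fibration condition over a contractible base to $(\fib_bp\hookrightarrow p^{-1}(U))\in\mcH$, and the telescope-versus-colimit identification).
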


\begin{proof}
By definition $\sigma^*(E)\ra B^n$ is an $\mcH$-fibration over any closed ball $B^n\subseteq D^n$. As open balls are increasing unions of closed ones the claim easily follows from the homotopy colimit axiom.
\end{proof}

\begin{theorem}
If $B$ has a cover by open subsets over which $p$ is a universal $\mcH$-fibration then $p$ itself is a universal $\mcH$-fibration.
\end{theorem}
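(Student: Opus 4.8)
The plan is to reduce the assertion to the definition of a universal $\mcH$-fibration, which requires checking that for every map $\sigma : D^n \to B$ the pullback $\hat{p} : \sigma^*E \to D^n$ is an $\mcH$-fibration. First I would pull back the given open cover $\{V_\alpha\}$ of $B$ (over each of which $p$ is a universal $\mcH$-fibration) along $\sigma$, obtaining an open cover $\{\sigma^{-1}(V_\alpha)\}$ of $D^n$. Over each $\sigma^{-1}(V_\alpha)$ the restriction of $\hat{p}$ is a pullback of $p|_{V_\alpha}$, and since a universal $\mcH$-fibration is closed under pullbacks (the ``easy but equally important observation'' noted right after the definition), the restriction of $\hat{p}$ over $\sigma^{-1}(V_\alpha)$ is again a universal $\mcH$-fibration. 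By the Lemma immediately preceding this theorem, a universal $\mcH$-fibration over a disc is a local $\mcH$-fibration; applied here (after noting $\sigma^{-1}(V_\alpha)$ is an open subset of a disc, hence locally contractible), each restriction of $\hat{p}$ over $\sigma^{-1}(V_\alpha)$ is a local $\mcH$-fibration.

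Next I would observe that the property of being a local $\mcH$-fibration is itself local: if a map has an open cover of its base over each member of which it is a local $\mcH$-fibration, then it is a local $\mcH$-fibration (one simply takes the union of the corresponding bases). Hence $\hat{p} : \sigma^*E \to D^n$ is a local $\mcH$-fibration. Since $D^n$ is locally contractible, Theorem~\ref{theorem_local_to_global_hurewicz} (in the form of the remark following it, ``a local $\mcH$-fibration over a CW-complex is an $\mcH$-fibration'') shows that $\hat{p}$ is an $\mcH$-fibration. As $\sigma$ was arbitrary, $p$ is a universal $\mcH$-fibration.

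The main obstacle is making sure the hypotheses of Theorem~\ref{theorem_local_to_global_hurewicz} are genuinely available when we invoke it: that theorem needs a \emph{complete} cover by \emph{contractible} opens, not merely a cover by locally contractible opens. The cleanest fix is to pass to a refinement: starting from the cover of $D^n$ by the $\sigma^{-1}(V_\alpha)$, each of which carries the structure of a local $\mcH$-fibration, one extracts for each point a contractible open neighborhood over which $\hat{p}$ is an $\mcH$-fibration (using the basis of $\mcH$-fibration opens provided by the local $\mcH$-fibration structure and local contractibility of $D^n$); taking all such neighborhoods, intersected down if necessary, yields a complete cover of $D^n$ by contractible opens over which $\hat{p}$ is an $\mcH$-fibration, as Theorem~\ref{theorem_local_to_global_hurewicz} demands. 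Once this bookkeeping is in place the conclusion is immediate. One should also double-check the one genuinely used input — closure of universal $\mcH$-fibrations under pullback — but that is routine, since pulling back the pullback square along $\sigma' : D^m \to \sigma^{-1}(V_\alpha)$ gives a pullback square for $p|_{V_\alpha}$ along $\sigma \circ \sigma'$.
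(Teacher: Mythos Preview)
Your argument is correct and follows essentially the same route as the paper's proof. The one cosmetic difference is that the paper invokes barycentric subdivision of $|\Delta^n|$ so that each closed simplex of the subdivision lands in some $V_\alpha$; this produces genuine discs over which the preceding lemma applies verbatim, and the resulting bases of open balls assemble to show $\sigma^*E\to|\Delta^n|$ is a local $\mcH$-fibration. You instead work directly with the open sets $\sigma^{-1}(V_\alpha)$ and (correctly) observe that the lemma's proof goes through over any open subset of a disc, since small closed balls inside $\sigma^{-1}(V_\alpha)$ still pull back from $p|_{V_\alpha}$. Both arguments finish identically by appealing to Theorem~\ref{theorem_local_to_global_hurewicz}, and your care about producing a \emph{complete} cover by contractible opens is exactly the bookkeeping the paper leaves implicit.
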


\begin{proof}
Let $\sigma:|\Delta^n|\ra B$ be a map. Barycentric subdivision and the previous lemma show that $\sigma^*E\ra|\Delta^n|$ is a local $\mcH$-fibration. The proof is then completed by refering to Theorem~\ref{theorem_local_to_global_hurewicz}.
\end{proof}

\begin{theorem}
Every universal $\mcH$-fibration $p:E\ra B$ is an $\mcH$-fibration.
\end{theorem}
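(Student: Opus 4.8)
The plan is to reduce to the case of a contractible base $B$ and there exhibit $E$ as a homotopy colimit, over a category with contractible classifying space, of pullbacks of $p$ along maps out of discs; each such pullback is an $\mcH$-fibration by the very definition of a universal $\mcH$-fibration, so the homotopy colimit axiom will force the fibre inclusion into the total space to be an $\mcH$-equivalence. For the reduction, fix $b\in B$ and pull $p$ back along the path-space fibration $\ev_1\colon\map_*(I,B)\to B$, obtaining $\hat p\colon\hofib_bp\to\map_*(I,B)$. Then $\hat p$ is again a universal $\mcH$-fibration (universal $\mcH$-fibrations are stable under pullback), its base is contractible, the fibre $\fib_{c_b}\hat p$ over the constant path is $\fib_bp$, and, since $\ev_1$ is a Hurewicz fibration, the defining square of $\hat p$ is a homotopy pullback, so the canonical map $\hofib_{c_b}\hat p\to\hofib_bp$ is a weak equivalence compatible with the fibre inclusions. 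By the $2$-out-of-$3$ axiom it then suffices to prove that $\hat p$ is an $\mcH$-fibration over $c_b$; equivalently, we may assume $B$ contractible, in which case $\hofib_bp\to E$ is a weak equivalence (a pullback of the acyclic Hurewicz fibration $\map_*(I,B)\to B$) and it is enough to show $\fib_bp\to E\in\mcH$. This is the same device used for the general case of Theorem~\ref{theorem_local_to_global_hurewicz}.

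The key observation is that an $\mcH$-fibration $q\colon Q\to D$ over a contractible space $D$ has \emph{all} fibre inclusions $\fib_xq\to Q$ in $\mcH$: the projection $\hofib_xq\to Q$ is a Hurewicz fibration whose fibres are path spaces in the contractible $D$, hence weakly contractible, so it is a weak equivalence, and applying the $2$-out-of-$3$ axiom to $\fib_xq\to\hofib_xq\to Q$ gives the claim. This applies in particular to any universal $\mcH$-fibration over a disc, since such a map is an $\mcH$-fibration by definition. Consequently, given a universal $\mcH$-fibration $q\colon Q\to\Delta^n$ and any affine map $\alpha\colon\Delta^m\to\Delta^n$, the projection $\alpha^*Q\to Q$ lies in $\mcH$: for any $x\in\Delta^m$ the inclusion $\fib_x(\alpha^*Q)\to\alpha^*Q$ lies in $\mcH$ by the observation (applied to $\alpha^*Q\to\Delta^m$), and the composite $\fib_x(\alpha^*Q)\to\alpha^*Q\to Q$ is, through the canonical identification $\fib_x(\alpha^*Q)\cong\fib_{\alpha(x)}q$, the fibre inclusion $\fib_{\alpha(x)}q\to Q$, which also lies in $\mcH$; the $2$-out-of-$3$ axiom then gives $\alpha^*Q\to Q\in\mcH$. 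Since every morphism in the simplex category of the singular complex of $B$ acts on the relevant pullbacks of $p$ along singular simplices by such a projection, the diagram $\sigma\mapsto\sigma^*E$ takes values in $\mcH$.

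Now assume $B$ contractible. The homotopy-colimit presentation of \cite{DuggerIsaksen} already used in the proof of Theorem~\ref{theorem_local_to_global_hurewicz} supplies a category $\mcI$ with $B\mcI\simeq *$ and a weak equivalence $\hocolim_{\mcI}D\simeq E$, where $D$ is the diagram $\sigma\mapsto\sigma^*E$ indexed by the singular simplices $\sigma\colon\Delta^n\to B$ (or by a complete cover of $B$ by contractible opens). Each term $\sigma^*E$ sits over a disc as an $\mcH$-fibration because $p$ is universal, and by the previous paragraph all transition maps of $D$ lie in $\mcH$. The homotopy colimit axiom therefore shows that every structure map $\sigma^*E\to\hocolim_{\mcI}D$ lies in $\mcH$. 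Taking $\sigma$ to be the constant $0$-simplex at $b$, the term $\sigma^*E$ is $\fib_bp$, and the structure map followed by the equivalence $\hocolim_{\mcI}D\simeq E$ is the fibre inclusion $\fib_bp\to E$; being a composite of $\mcH$-equivalences, it lies in $\mcH$. Hence $\fib_bp\to E\in\mcH$, and therefore $\fib_bp\to\hofib_bp\in\mcH$, as required.

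The step I expect to be the main obstacle is the decomposition itself: producing a homotopy-colimit presentation of $E$ over an index category with contractible classifying space whose terms are pullbacks of $p$ along maps out of discs (so that universality applies termwise) and then checking honestly that all transition maps land in $\mcH$. The latter verification is exactly where the key observation, combined with repeated use of the $2$-out-of-$3$ axiom, does the work, and the argument as a whole runs closely parallel to the contractible-base case of Theorem~\ref{theorem_local_to_global_hurewicz}.
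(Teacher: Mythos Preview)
Your argument is correct, but the decomposition differs from the paper's. Rather than reducing to a contractible base and indexing over the simplex category $\Delta/SB$, the paper works directly with $\hofib_bp$: it introduces the \emph{filtered} category $\mcK(B)$ of pointed maps $\sigma\colon|CK|\ra B$ from cones on finite simplicial sets, sets $F(K,\sigma)$ equal to the homotopy fibre of $\sigma^*E\ra|CK|$, and observes that $\hocolim F\simeq\hofib_bp$ because every map from a sphere (and every homotopy) into $\hofib_bp$ factors through some $\sigma^*E$. Filteredness makes this weak equivalence an elementary check on $\pi_*$ and gives $B\mcK(B)\simeq*$ for free. Your route instead needs $\hocolim_\sigma\sigma^*E\simeq E$; note that this is not quite the Dugger--Isaksen result you cite (that one, as used in Theorem~\ref{theorem_local_to_global_hurewicz}, is for complete open covers, and your contractible base $\map_*(I,B)$ generally admits no useful such cover). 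The statement is nonetheless true---for instance, identify $\Delta/S(\sigma^*E)$ with the comma category $p_*\downarrow\sigma$ for $p_*\colon\Delta/SE\ra\Delta/SB$ and compute $\hocolim_{\Delta/SB}\sigma^*E\simeq\hocolim_{\Delta/SE}*\simeq|SE|\simeq E$---but it deserves a word of justification rather than a citation. After the decomposition both proofs run identically: transition maps lie in $\mcH$ by the 2-out-of-3 axiom, the homotopy colimit axiom applies, and the fibre term yields the conclusion. The paper's filtered category buys a self-contained verification of the key weak equivalence; your simplex-category approach is more systematic but leans on a small extra piece of hocolim machinery.
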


\begin{proof}
Fix a basepoint $b\in B$ and form the category $\mcK(B)$ whose objects are pairs $(K,\sigma)$ with $K$ a finite simplicial set and $\sigma:|CK|\ra B$ a pointed map. Morphisms are simplicial maps for which the obvious diagram commutes. Then for each such $(K,\sigma)$ we get a pullback $\mcH$-fibration $\sigma^*E\ra|CK|$, whose homotopy fibre yields a functor $F:\mcK(B)\ra\Top$. We claim now that the obvious map
\[\hocolim F\ra\hofib_bp\]
is a weak equivalence. Easily $\mcK(B)$ is filtered (as it is closed under finite colimits) and thus our claim amounts to the isomorphism $\colim\pi_*(F)\cong\pi_*(\hofib_bp)$ of homotopy groups which clearly holds as every map $S^i\ra\hofib_bp$ comes from a map $S^i\ra\sigma^*E$ and the same is true for homotopies. As the diagram lies in $\mcH$ and $\mcK(B)$ is filtered, implying contractibility of its classifying space, the composition
\[\fib_bp=F(\emptyset,b)\lra\hocolim F\lra\hofib_bp\]
then also lies in $\mcH$.
\end{proof}

\begin{corollary} \label{corollary_main}
Universal $\mcH$-fibrations are precisely those $\mcH$-fibrations that are stable under pullbacks. \qed
\end{corollary}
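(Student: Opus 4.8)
The plan is short, since the statement essentially repackages the results just proved. First I would record that universality is itself stable under pullbacks, which follows from the pasting law for pullback squares: given a universal $\mcH$-fibration $p:E\ra B$ and an arbitrary map $f:B'\ra B$, form the pullback $\hat p:f^*E\ra B'$; then for any $\sigma:D^n\ra B'$ the square exhibiting $\sigma^*(f^*E)$ is obtained by pasting and hence agrees with the one exhibiting $(f\sigma)^*E$, and the latter is an $\mcH$-fibration because $p$ is universal. Thus $\hat p$ is again a universal $\mcH$-fibration.

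Combining this observation with the preceding theorem, which asserts that every universal $\mcH$-fibration is an $\mcH$-fibration, gives one direction: a universal $\mcH$-fibration is an $\mcH$-fibration, and each of its pullbacks, being again universal, is an $\mcH$-fibration as well, so $p$ is an $\mcH$-fibration that is stable under pullbacks. Conversely, if $p:E\ra B$ is an $\mcH$-fibration all of whose pullbacks are $\mcH$-fibrations, then in particular $\sigma^*E\ra D^n$ is an $\mcH$-fibration for every $\sigma:D^n\ra B$, which is precisely the defining property of a universal $\mcH$-fibration. This closes the equivalence.

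I do not expect any genuine obstacle here; the only points requiring (routine) care are the pasting law for pullbacks invoked in the first step and the reading of ``stable under pullbacks'' as the statement that the pullback along an \emph{arbitrary} map is an $\mcH$-fibration, rather than merely the pullback along maps out of discs. Once these are granted, both implications are immediate, which is why the corollary is stated without proof.
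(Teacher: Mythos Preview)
Your argument is correct and matches the paper's own reasoning: the paper has already observed (right after the definition) that universal $\mcH$-fibrations are closed under pullbacks, and the corollary is then stated with a \qed{} immediately after the theorem that every universal $\mcH$-fibration is an $\mcH$-fibration, exactly the two ingredients you combine. The converse direction you spell out is likewise the tautology the paper has in mind.
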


\begin{sdefinition}
We say that a sequence $F\xra{i}E\xra{p}B$ is an \emph{$\mcH$-fibration sequence} if there is given a null-homotopy (say to $b\in B$) of the composition $pi$ in such a way that the induced map $F\ra\hofib_bp$ is an $\mcH$-equivalence. A typical example is, for an $\mcH$-fibration $p$, the sequence $\fib_bp\hookrightarrow E\xra{p}B$ .

A commutative square is called $\mcH$-cartesian or an $\mcH$-pullback square
\[\xymatrix{
E' \ar[r] \ar[d] & E \ar[d] \\
B' \ar[r] & B
}\]
if the canonical map from $E'$ to the homotopy pullback of $B'\ra B\la E$ is an $\mcH$-equivalence.
\end{sdefinition}


\begin{theorem}
Let
\[\xymatrix{
E' \ar[r] \ar[d]_{p'} & E \ar[d]^p \\
B' \ar[r]^g & B
}\]
be a pullback square with $p$ a universal $\mcH$-fibration. Then it is an $\mcH$-pullback square.
\end{theorem}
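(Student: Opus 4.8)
The plan is to model the homotopy pullback of $B'\xra{g}B\xla{p}E$ by a genuine fibration over $B'$ and then feed the comparison map into the extension axiom~(3). Write $P=B'\times_B\map_*(I,B)\times_BE$ for the homotopy pullback, with projection $q\colon P\ra B'$; as $q$ is pulled back from the path fibration it is a Hurewicz fibration, so for each $b'\in B'$ the canonical inclusion $\fib_{b'}q\ra\hofib_{b'}q$ is a weak equivalence, while $\fib_{b'}q$ is canonically the homotopy fibre $\hofib_{g(b')}p$. The canonical map $\varphi\colon E'\ra P$, $(b',e)\mapsto(b',\mathrm{const}_{g(b')},e)$, is a map over $B'$, and on strict fibres over $b'$ it is precisely the tautological inclusion $\fib_{g(b')}p\hookrightarrow\hofib_{g(b')}p$; since $p$ is a universal $\mcH$-fibration it is an $\mcH$-fibration by Corollary~\ref{corollary_main}, so this inclusion lies in $\mcH$.

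First I would note that $p'$, being a pullback of the universal $\mcH$-fibration $p$, is again a universal $\mcH$-fibration (universality is stable under pullback) and hence an $\mcH$-fibration; thus for every $b'$ the canonical inclusion $\fib_{b'}p'\ra\hofib_{b'}p'$ is an $\mcH$-equivalence. Fix $b'\in B'$ and consider the commutative square comparing strict and homotopy fibres of the $B'$-map $\varphi$:
\[\xymatrix{
\fib_{b'}p' \ar[r] \ar[d] & \fib_{b'}q \ar[d] \\
\hofib_{b'}p' \ar[r] & \hofib_{b'}q .
}\]
Its top map lies in $\mcH$ (it is the tautological inclusion $\fib_{g(b')}p\hookrightarrow\hofib_{g(b')}p$ identified above) and its right map is a weak equivalence, so the composite $\fib_{b'}p'\ra\hofib_{b'}q$ lies in $\mcH$ by the 2-out-of-3 axiom; since the left map is also an $\mcH$-equivalence, another use of 2-out-of-3 places the bottom map $\hofib_{b'}p'\ra\hofib_{b'}q$ in $\mcH$. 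As $b'$ was arbitrary, applying the extension axiom~(3) to $\varphi\colon E'\ra P$ over $B'$ yields $\varphi\in\mcH$, which is exactly the statement that the given square is $\mcH$-cartesian.

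I do not expect a genuine obstacle. The only step that needs care is the verification that the strict fibre of $q$ over $b'$ coincides with $\hofib_{g(b')}p$ and that $\varphi$ restricts there to the canonical inclusion $\fib_{g(b')}p\hookrightarrow\hofib_{g(b')}p$ — this is precisely where the hypothesis that $p$ is an $\mcH$-fibration is used. The rest is bookkeeping with axioms~(0), (2), (3) and the facts, established above, that universal $\mcH$-fibrations are closed under pullback and are themselves $\mcH$-fibrations.
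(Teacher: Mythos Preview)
Your proof is correct and follows essentially the same route as the paper: replace $p$ by a fibration (you use the explicit path-space model, the paper an abstract fibrant replacement $\hat p$), identify the strict fibres over $b'$ with $\fib_{g(b')}p$ and $\hofib_{g(b')}p$, use that both $p$ and $p'$ are $\mcH$-fibrations via Corollary~\ref{corollary_main}, and conclude with the extension axiom~(3). One small notational slip: the middle factor of your $P$ should be the free path space $\map(I,B)$ rather than $\map_*(I,B)$, which in the paper's conventions denotes based paths.
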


\begin{proof}
Let $b'\in B'$ be an arbitrary point and consider the following diagram. It is constructed by replacing $p:E\ra B$ by a fibration $\hat{p}:\hat{E}\ra B$ up to a weak equivalence, considering (topological) fibres $F$ and $\hat{F}$ over $g(b')$ and then pulling everything back to $B'$. Thus $\hat{E}'$ is the homotopy pullback.
\[\xymatrix@R=5pt@C=10pt{
& & & & F \ar[rrdd]^\mcH \ar[dddd]|!{[lldddd];[rrdd]}\hole & & \\ \\
F' \ar@{=}[rrrruu] \ar[rrdd]^\mcH \ar[dddd] & & & & & & \hat{F} \ar[dddd] \\ \\
& & \hat{F}' \ar@{=}[rrrruu] \ar[dddd] & & E \ar@{-}[uuuu]|!{[ll];[rruu]}\hole \ar[rrdd]^\sim \ar[rddddd]|!{[lldddd];[rrdd]}\hole_(.4)p & & \\ \\
E' \ar[rrrruu]|!{[rruu];[rrdd]}\hole \ar[rrdd] \ar[rddddd] & & & & & & \hat{E} \ar@{->>}[lddd]^{\hat{p}} \\ \\
& & \hat{E}' \ar[rrrruu] \ar@{->>}[lddd] & & & & \\
& & & & & B & \\ \\
& B' \ar[rrrruu]_g & & & & &
}\]
By the previous corollary both vertical sequences on the left are $\mcH$-fibration sequences and the proof is finished by referring to the extension axiom for $\mcH$.
\end{proof}

For completeness we prove the gluing property for local $\mcH$-fibrations. Observe that the proof only requires $\mcH$ to be a subcategory of $\Top$ satisfying (0).

\begin{theorem} \label{theorem_hocolim_invariance}
Let $\mcI$ be a small category and $p:E\ra B$ a natural transformation between two functors $E,B:\mcI\ra\Top$ such that all the components $p_i$ are local $\mcH$-fibrations and such that for any morphism $\alpha:i\ra j$ the induced map $\fib_bp_i\ra\fib_{\alpha_*(b)}p_j$ is in $\mcH$ for all $b\in B_i$. Then the induced map
\[\hocolim E\ra\hocolim B\]
on homotopy colimits is itself a local $\mcH$-fibration.
\end{theorem}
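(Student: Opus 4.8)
The plan is to verify the definition of a local $\mcH$-fibration by hand: I will produce a basis for the topology of $\hocolim B$ consisting of open sets $W$ over which the induced map $\hocolim p$ is an honest $\mcH$-fibration. Recall that $\hocolim B$ is the realization of the simplicial space $[n]\mapsto\coprod_{i_0\ra\cdots\ra i_n}B_{i_0}$, so that every point has a canonical representative $[b,t]$ with $b\in B_{i_0}$ for a nondegenerate chain $c=(i_0\ra\cdots\ra i_n)$ and $t$ in the interior of $|\Delta^n|$; call $i_0$ the \emph{anchor} of the point. Since each $p_i$ is a local $\mcH$-fibration, fix for every $i$ a basis $\mcB_i$ of the topology of $B_i$ by open sets over which $p_i$ is an $\mcH$-fibration. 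For $x=[b,t]$ with anchor $i_0$, for $U\in\mcB_{i_0}$ with $b\in U$, and for a small convex neighbourhood $O$ of the image $\bar x$ of $x$ in the interior of the nondegenerate simplex of $N\mcI$ indexed by $c$, let $W=W(U,O)\subseteq\hocolim B$ be the intersection of $r_{i_0}^{-1}(U)$ with the preimage of $O$ under the projection $\hocolim B\ra|N\mcI|$, where $r_{i_0}$ is the retraction described below. These sets are open; checking that they form a basis is a routine point-set verification using that each $\mcB_i$ is a basis.

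The only non-formal ingredient is a standard feature of homotopy colimits. Over such a chart, the map $(\hocolim p)|_W\colon(\hocolim E)|_W\ra W$ admits a deformation retraction onto the restriction $p_{i_0}|_U\colon p_{i_0}^{-1}(U)\ra U$: one slides the simplex coordinate of each point of $W$ towards the fixed interior point $\bar x$ of the anchor simplex and moves the $B$-coordinate accordingly, i.e.\ by the structure maps $B_{i'}\ra B_{i_0}$ for the objects $i'$ preceding $i_0$ in the ambient chain; the same slide on total spaces gives a retraction covering it which, on each cell indexed by a chain $i'\ra\cdots\ra i_0\ra\cdots$ that extends the anchor chain backwards, is the structure map $E_{i'}\ra E_{i_0}$. (This is the local version of the evident fact that the preimage in $\hocolim B$ of the open star of the vertex $i_0$ deformation retracts onto $B_{i_0}$, and $r_{i_0}$ is this retraction.) Consequently, for $y\in W$ with anchor $i'$ and canonical representative $[b',u']$: the topological fibre of $(\hocolim p)|_W$ over $y$ equals $\fib_{b'}p_{i'}$; the retraction restricts on it to the structure map $\fib_{b'}p_{i'}\ra\fib_{\bar b}p_{i_0}$, where $\bar b\in U$ is the image of $b'$; and the retraction identifies $\hofib_y((\hocolim p)|_W)$ with $\hofib_{\bar b}(p_{i_0}|_U)$ compatibly with the inclusions of topological fibres.

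It remains to read off that $(\hocolim p)|_W$ is an $\mcH$-fibration. For $y\in W$ as above, the canonical map $\fib_y((\hocolim p)|_W)\ra\hofib_y((\hocolim p)|_W)$, composed with the weak equivalence $\hofib_y((\hocolim p)|_W)\xra{\sim}\hofib_{\bar b}(p_{i_0}|_U)$ supplied by the retraction, equals
\[\fib_{b'}p_{i'}\lra\fib_{\bar b}p_{i_0}\lra\hofib_{\bar b}(p_{i_0}|_U),\]
whose first map lies in $\mcH$ by the hypothesis on the structure maps (and is the identity when $i'=i_0$) and whose second map lies in $\mcH$ because $p_{i_0}$ is an $\mcH$-fibration over $U\in\mcB_{i_0}$. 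Hence the canonical map $\fib_y((\hocolim p)|_W)\ra\hofib_y((\hocolim p)|_W)$ lies in $\mcH$ too. This is the only place any property of $\mcH$ enters, and, as remarked before the statement, all that is used is that $\mcH$ is a subcategory of $\Top$ satisfying axiom (0); neither the extension nor the homotopy colimit axiom is needed. As $W$ ranges over the basis constructed above, this shows that $\hocolim p$ is a local $\mcH$-fibration.

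The hard part is the middle paragraph: making the charts $W$ into an honest basis, and, above all, constructing the sliding retraction and its lift to total spaces coherently across all the gluing identifications of $\hocolim B$ --- one must choose the homotopy sliding the simplex coordinate so that it is continuous, covers the base retraction, reduces on each backward cell to exactly the structure map, and keeps every point inside $W$ throughout. This is the analogue, for a general indexing category, of the bookkeeping carried out for an open cover in \cite{DuggerIsaksen}, and it runs parallel to the proof of Proposition~5 in \cite{McDuffSegal}; the role of the hypothesis that each fibre map $\fib_bp_i\ra\fib_{\alpha_*(b)}p_j$ lies in $\mcH$ is precisely to control the backward cells.
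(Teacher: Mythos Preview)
Your argument is essentially the same as the paper's: both use that neighbourhoods in the realization of a (degeneracy-free) simplicial space deformation retract onto simpler pieces, and that these retractions lift to the total space where they are built from the structure maps of the diagram. The paper organises this as an induction on skeleta (passing from $\sk_{n-1}|Y|$ to $\sk_n|Y|$ via the pushout along $Z_n\times\partial\Delta^n$), whereas you describe the basic open sets of the full realization in one go; the underlying retractions and the way the hypothesis on fibre maps enters are identical.

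One small imprecision worth fixing: your description of $O$ as a ``convex neighbourhood \ldots\ in the interior of the nondegenerate simplex'' should be read as an open neighbourhood of $\bar x$ in $|N\mcI|$ contained in the open star of the simplex $c$ (as your subsequent discussion of backward cells and of $r_{i_0}$ makes clear). Taken literally, an $O$ lying entirely in the interior of a single simplex would not make $W$ open in $\hocolim B$ once there are chains extending $c$. With this reading, your final paragraph correctly identifies the remaining point-set bookkeeping, which neither you nor the paper carries out in full.
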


\begin{proof}
This is really a statement about simplicial spaces. Suppose that $X$ and $Y$ are simplicial spaces, $f:X\ra Y$ is a natural transformation, and that both $X$ and $Y$ are degeneracy free on $W_n$ and $Z_n$, see \cite{GoerssJardine}. Suppose that each $W_n\ra Z_n$ is a local $\mcH$-fibration (with fibres related as in the statement). Then $\sk_n|X|\ra\sk_n|Y|$ is a map of pushouts of the rows of the diagram
\[\xymatrix{
W_n\times\Delta^n \ar[d] & W_n\times\partial\Delta^n \ar[l] \ar[r] \ar[d] & \sk_{n-1}|X| \ar[d] \\
Z_n\times\Delta^n & Z_n\times\partial\Delta^n \ar[l] \ar[r] & \sk_{n-1}|Y|
}\]
Easily the portion of the pushout over $Z_n\times(\interior\Delta^n)$ is a local $\mcH$-fibration. To get the same near any point of $\sk_{n-1}|Y|$ observe that one gets a neighbourhood basis consisting of open sets which deformation retract onto a neighbourhood in $\sk_{n-1}|Y|$ and that these deformations also work in $\sk_n|X|$. The very same argument shows that the map between full realizations is also a local $\mcH$-fibration.
\end{proof}

A natural transformation $p:E\ra B$ between two functors $E,B:\mcI\ra\Top$ is called \emph{$\mcH$-equifibred} if for each morphism $f:i\ra j$ in $\mcI$ and any $b\in Bi$ the map $\hofib_bp_i\ra\hofib_{f_*(b)}p_j$ lies in $\mcH$.

\begin{corollary} \label{corollary_gluing_property}
Let $\mcI$ be a small category and $p:E\ra B$ an $\mcH$-equifibred natural transformation between functors $E,B:\mcI\ra\Top$. Then the inclusion of the homotopy fibre $\hofib_bp_i$ of the $i$-component of $p$ into the homotopy fibre of $\hocolim E\ra\hocolim B$ over the same point $b\in Bi$ is an $\mcH$-equivalence.
\end{corollary}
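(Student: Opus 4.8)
The plan is to reduce Corollary~\ref{corollary_gluing_property} to Theorem~\ref{theorem_hocolim_invariance} by a standard "replace $p$ fibrewise by a fibration" maneuver. First I would factor the natural transformation $p:E\ra B$ through a natural transformation $\hat p:\hat E\ra B$ that is objectwise a Hurewicz fibration and such that $E\ra\hat E$ is an objectwise weak equivalence; this can be done functorially using the (mapping path space) $\hat E_i=E_i\times_{B_i}\map(I,B_i)$ construction, which is natural in $i$ since everything in sight is built from the functors $E,B$ by finite limits and mapping spaces. By construction the topological fibre $\fib_b\hat p_i$ of $\hat p_i$ over $b\in B_i$ is canonically the homotopy fibre $\hofib_bp_i$, and the inclusion $\fib_b\hat p_i\hookrightarrow\hofib_b\hat p_i$ is a homotopy equivalence (a Hurewicz fibration is an $\mcH$-fibration for trivial reasons, using axiom~(0)), so each $\hat p_i$ is a local $\mcH$-fibration. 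Moreover, for a morphism $f:i\ra j$, the induced map on topological fibres $\fib_b\hat p_i\ra\fib_{f_*(b)}\hat p_j$ agrees up to weak equivalence (in fact homotopy equivalence) with $\hofib_bp_i\ra\hofib_{f_*(b)}p_j$, which lies in $\mcH$ by the $\mcH$-equifibred hypothesis; by the $2$-out-of-$3$ axiom applied to weak equivalences, the fibre map for $\hat p$ itself lies in $\mcH$.

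With these observations in hand, Theorem~\ref{theorem_hocolim_invariance} applies to $\hat p:\hat E\ra B$ and shows that $\hocolim\hat E\ra\hocolim B$ is a local $\mcH$-fibration. Next I would identify the homotopy fibre of this map over $b\in B_i$. Since $\hat p$ is objectwise a Hurewicz fibration with the fibre condition just verified, the homotopy colimit of the fibres computes the fibre of the homotopy colimit: more precisely, the square obtained by taking homotopy colimits of $\fib_{(-)}\hat p\ra\hat E\ra B$ is a homotopy pullback, because homotopy colimits commute with homotopy pullbacks along objectwise fibrations (the Bousfield--Kan style statement, or one can invoke that the realization of the simplicial space of fibres is the fibre of the realizations when the simplicial space is "good" — exactly the degeneracy-free situation arising in the proof of Theorem~\ref{theorem_hocolim_invariance}). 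Hence $\hofib_b(\hocolim\hat E\ra\hocolim B)\simeq\hocolim_{\mcI}\big(j\mapsto\fib_{(-)}\hat p_j\big)$ evaluated appropriately, but what we actually need is simpler: the structure map $\fib_b\hat p_i\ra\hofib_b(\hocolim\hat E\ra\hocolim B)$.

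The final step is to run the $\mcH$-equivalence bookkeeping. We have a commuting triangle whose vertices are $\fib_bp_i$ (which is weakly equivalent to, indeed a retract up to homotopy of, $\hofib_bp_i=\fib_b\hat p_i$), the homotopy colimit $\hocolim_{\mcI}\fib_{(-)}\hat p$, and $\hofib_b(\hocolim E\ra\hocolim B)$ (which is weakly equivalent to $\hofib_b(\hocolim\hat E\ra\hocolim B)$ since $E\ra\hat E$ is an objectwise weak equivalence and homotopy colimits preserve objectwise weak equivalences). The map $\fib_b\hat p_i\ra\hocolim\fib_{(-)}\hat p$ is a structure map of a homotopy colimit of a diagram lying in $\mcH$ (the diagram of fibres, by the fibre-map condition above) indexed over a category whose nerve we must know is contractible. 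Here is the main obstacle: the indexing category for this homotopy colimit of fibres is not $\mcI$ itself but the comma-type category of "objects of $\mcI$ under $i$ together with a point of the fibre", or rather one works over $\mcI/b$-style slices, and one must check its nerve is contractible — this is the analogue of the classical Dold--Thom/Quillen argument and is where one invokes the homotopy colimit axiom~(4). Granting contractibility of that nerve, axiom~(4) puts the structure map $\fib_b\hat p_i\ra\hocolim\fib_{(-)}\hat p$ in $\mcH$; composing with the weak equivalence to $\hofib_b(\hocolim E\ra\hocolim B)$ and using the $2$-out-of-$3$ axiom gives that the total composite $\hofib_bp_i\ra\hofib_b(\hocolim E\ra\hocolim B)$ is an $\mcH$-equivalence, which is the claim. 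I expect the bookkeeping of which slice category indexes the fibrewise homotopy colimit, and verifying its nerve is contractible, to be the only genuinely delicate point; everything else is assembling axioms (0), (2), (4) and Theorem~\ref{theorem_hocolim_invariance}.
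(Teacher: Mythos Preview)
Your first paragraph is exactly the paper's setup: replace $p$ objectwise by a Hurewicz fibration $\hat p$ via the mapping path space, so that $\fib_b\hat p_i=\hofib_bp_i$ and the $\mcH$-equifibred hypothesis turns into the fibre condition of Theorem~\ref{theorem_hocolim_invariance}. You also correctly apply that theorem to obtain that $\hocolim\hat E\ra\hocolim B$ is a local $\mcH$-fibration.

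From this point on you take a detour that is both unnecessary and incomplete. You try to compute the homotopy fibre of the hocolim map as a homotopy colimit of fibres and then invoke axiom~(4) on a slice-type category whose nerve you yourself flag as the ``main obstacle''. The paper avoids all of this. The missing observation is that, after additionally replacing each $B_i$ by a CW-complex (the paper's phrase ``over a CW-complex''), the base $\hocolim B$ is itself a CW-complex and hence locally contractible; therefore Theorem~\ref{theorem_local_to_global_hurewicz} upgrades the local $\mcH$-fibration $\hocolim\hat E\ra\hocolim B$ to an honest $\mcH$-fibration. Now just read the \emph{definition} of $\mcH$-fibration: the inclusion of the topological fibre into the homotopy fibre over $b$ is an $\mcH$-equivalence. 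But the topological fibre of $\hocolim\hat E\ra\hocolim B$ over a point $b\in B_i$ (sitting at a vertex of the bar construction) is precisely $\fib_b\hat p_i=\hofib_bp_i$, and the homotopy fibre is weakly equivalent to $\hofib_b(\hocolim E\ra\hocolim B)$ since $E\ra\hat E$ is an objectwise weak equivalence. That is the whole argument.

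So your Bousfield--Kan style ``hocolim commutes with homotopy pullback'' claim, and the contractibility of the slice category you worry about, are red herrings; the corollary is a two-line consequence of Theorems~\ref{theorem_local_to_global_hurewicz} and~\ref{theorem_hocolim_invariance} plus the definition of $\mcH$-fibration.
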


\begin{proof}
Replace $p$ objectwise by a fibration $\hat{p}:\hat{E}\ra B$ over a CW-complex and apply the previous theorem and Theorem~\ref{theorem_local_to_global_hurewicz} to conclude that $\hocolim\hat{E}\ra\hocolim B$ is an $\mcH$-fibration. The topological fibre of $\hat{p}$ over $b\in Bi$ is the homotopy fibre of the $i$-component of the original transformation: $\fib_b\hat{p}=\fib_b\hat{p}_i=\hofib_bp_i$.
\end{proof}

\section{Recognizing homology fibrations}

\begin{sdefinition}
A \emph{path-lifting map} for $p:E\ra B$ is a continuous map
\[\ell:E\times_B\map(I,B)\ra E\]
where $\map(I,B)$ is a space over $B$ via the evaluation at $0$. We require of $\ell$ that it is fibrewise, i.e. $\ell_\gamma=\ell(-,\gamma)$ maps $E_{\gamma(0)}$ to $E_{\gamma(1)}$.
\end{sdefinition}

\begin{remark}
Easily a path-lifting map for $p$ induces one for any pullback.
\end{remark}

Certainly this definition is not sufficient for proving anything. Although we do not ask for associativity some form of unit axiom will be essential. Let us therefore denote by $u$ the unit map
\[u:e\mapsto\ell(e,\underline{p(e)})\]
sending $e$ to the effect to $e$ of the constant path $\underline{p(e)}$ on $p(e)$. A most straightforward requirement is that $u$ should be \emph{equal} to identity. What we recover is then one of the equivalent definitions of a Hurewicz fibration. The second most natural assumption, that $u$ should be fibrewise homotopic to identity, yields what \cite{Dold} calls a weak covering homotopy property for $p$. Both of these are easily seen to be universal quasifibrations. This is generalized by the following definition and proposition.

\begin{sdefinition}
We say that $\ell$ is an \emph{$\mcH$-path-lifting map} if the unit map $u$ is an $\mcH$-equivalence. We say that $\ell$ is a universal $\mcH$-path-lifting map if the same holds for the induced path-lifting map on any pullback along a map $D^n\ra B$. A map $p:E\ra B$ is called a \emph{strong $\mcH$-fibration} if it admits a universal $\mcH$-path-lifting map.
\end{sdefinition}

\begin{proposition}
Under $(2^+)$ every strong $\mcH$-fibration is a universal $\mcH$-fibration.
\end{proposition}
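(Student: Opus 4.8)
The plan is to reduce the statement to the case of a disc, then to combine the path-lifting map with the contractibility of the disc, and finally to invoke a purely formal lemma about $\mcH$ in which the hypothesis $(2^+)$ is used.

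\emph{Reduction to a disc.} First I would note that the class of strong $\mcH$-fibrations is stable under pullback along an arbitrary map $g\colon B'\to B$: a path-lifting map pulls back by the Remark above, and the induced path-lifting map $g^*\ell$ on $g^*E\to B'$ is again universal, since pulling it further back along $\tau\colon D^n\to B'$ produces the same path-lifting map as pulling $\ell$ back along $g\tau\colon D^n\to B$. Hence for every $\sigma\colon D^n\to B$ the map $\sigma^*E\to D^n$ is a strong $\mcH$-fibration, and by the very definition of a universal $\mcH$-fibration it suffices to prove: \emph{a strong $\mcH$-fibration $p\colon E\to D^n$ over a disc is an $\mcH$-fibration.}

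\emph{The disc case.} Fix $b\in D^n$; I must show that the inclusion $j\colon\fib_bp\hookrightarrow\hofib_bp$ is an $\mcH$-equivalence. Let $\ell$ be a universal $\mcH$-path-lifting map with unit $u$. Pulling $\ell$ back along $\id_{D^n}$ recovers $\ell$ itself, so $u\in\mcH$; pulling it back along $\{b\}\hookrightarrow D^n$ shows $u_b:=u|_{\fib_bp}\in\mcH$. Writing $\hofib_bp=\{(e,\gamma):\gamma(0)=p(e),\ \gamma(1)=b\}$, define $r\colon\hofib_bp\to\fib_bp$ by $r(e,\gamma)=\ell_\gamma(e)$; then $rj=u_b\in\mcH$ directly. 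Let $\pr\colon\hofib_bp\to E$ be the projection: it is the pullback along $p$ of the path fibration $\{\gamma:\gamma(1)=b\}\to D^n$, hence a Hurewicz fibration whose fibres are path spaces in the contractible disc and therefore contractible, so $\pr$ is a weak equivalence. The map $\hat u(e,\gamma)=(u(e),\gamma)$ satisfies $\pr\circ\hat u=u\circ\pr$, so the 2-out-of-3 axiom forces $\hat u\in\mcH$. Finally, cutting $\gamma$ at time $t$ and lifting its initial segment gives a homotopy $H_t(e,\gamma)=(\ell_{\gamma|_{[0,t]}}(e),\gamma|_{[t,1]})$ with $H_0=\hat u$ and $H_1=j\circ r$, whence $jr\in\mcH$ as well.

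\emph{The formal step (the main obstacle).} It remains to pass from $jr\in\mcH$ and $rj\in\mcH$ to $j\in\mcH$; I expect this to be the only genuinely non-routine point, and it is where $(2^+)$ enters (neither the retract axiom (1) nor the one-sided axiom (2) suffices). I would establish the general lemma ``if $fg$ and $gf$ lie in $\mcH$ then so does $f$'' by a mapping-telescope swindle. Let $T$ be the telescope of the alternating sequence $X\xrightarrow{f}Y\xrightarrow{g}X\xrightarrow{f}\cdots$. The two cofinal subsequences of alternate terms identify $T$, up to homotopy equivalence, with $\mathrm{tel}(gf)$ and with $\mathrm{tel}(fg)$. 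The diagrams $X\xrightarrow{gf}X\to\cdots$ and $Y\xrightarrow{fg}Y\to\cdots$ are indexed by $\bbN$ (which has contractible nerve) and lie in $\mcH$, so by axiom (4) the structure maps $X\to T$ and $Y\to T$ lie in $\mcH$; since $(Y\to T)\circ f\simeq(X\to T)$ inside $T$, the classical 2-out-of-3 axiom $(2^+)$ gives $f\in\mcH$. Applying this with $f=j$ and $g=r$ shows $j\in\mcH$, i.e.\ $p$ is an $\mcH$-fibration, which by the reduction completes the proof.
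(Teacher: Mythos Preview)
Your proof is correct and follows essentially the same strategy as the paper: reduce to a disc by pulling back, use the path-lifting map to produce a retraction $r$ with both $rj$ and $jr$ in $\mcH$, then run a telescope swindle using axiom~(4) and $(2^+)$ to conclude $j\in\mcH$. The only cosmetic difference is that the paper works with the inclusion $\fib_bp\hookrightarrow E$ (using a fixed contraction of $D^n$ to build the retraction $E\to\fib_bp$) rather than with $\fib_bp\hookrightarrow\hofib_bp$; since your projection $\pr\colon\hofib_bp\to E$ is a weak equivalence over the contractible base, the two routes are interchangeable.
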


\begin{proof}
Taking a pullback along any map $D^n\ra B$ reduces to the case $B=D^n$. Any contraction of $D^n$ onto $b$ then produces a section $h:D^n\ra\map(I,D^n)$ and further a homotopy from an $\mcH$-equivalence $u$ to a map (hence $\mcH$-equivalence itself) $E\ra\fib_bp\hookrightarrow E$ via $e\mapsto\ell(e,h(p(e)))$. Since the other composition is also an $\mcH$-equivalence, ($2^+$) with (4) applied to $\fib_bp\hookrightarrow E\ra\fib_bp\hookrightarrow E\ra\cdots$ easily imply that the individual maps lie in $\mcH$.
\end{proof}

We will now prove two rather technical Theorems~\ref{theorem_recognize_homology_fibrations} and \ref{theorem_recognize_quasifibrations} that will be used in \cite{Vokrinek} for showing that a certain map is a universal homology fibration/universal quasifibration. We first make the following definition.

\begin{sdefinition}
We say that a nested sequence $E_k$ of subspaces of $E$ is a \emph{filtration} of $E$ if 
any map $K\ra E$ from a finite simplicial complex $K$ to $E$ can be homotoped into $E_k$ for some $k$.\footnote{More natural would be to require that $\hocolim E_k\ra E$ is a weak homotopy equivalence but this is not needed. Also our version is more easily made fibrewise.} We say that it is a \emph{fibrewise filtration} if this homotopy can be always chosen to be fibrewise.
\end{sdefinition}

\begin{slemma}
Consider a space $p:E\ra B$ over $B$ and a continuous fibre-preserving map $f:E\ra E$ for which there exists a fibrewise filtration $E_k$ of $E$, a family of covers $\mcU_k$ of $B$ and, for each $U\in\mcU_k$, a fibrewise homotopy
\[(E_k\cap p^{-1}(U))\times I\ra E\]
from $\id$ to $f$. Then the induced map $f_*$ in homology is surjective. The same is true for any pair $(E,E')$ with $E'=p^{-1}(B')$. When $B$ is a point $f$ is injective on all homotopy groups.
\end{slemma}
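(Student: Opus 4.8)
The plan is to get all three assertions out of one mechanism: the local homotopies $\id\simeq f$, although only defined over the members of the covers $\mcU_k$ and only on the filtration stages $E_k$, can be assembled into a chain homotopy once the cycle in question has been pushed down to a fixed stage and made small.

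I would first treat homology surjectivity in the absolute case. Let $\alpha\in H_n(E)$. Being finitely supported, $\alpha$ is the image of some $\gamma\in H_n(|K|)$ under a map $c:|K|\to E$ with $K$ a finite simplicial complex, and the filtration hypothesis homotopes $c$ fibrewise into $E_k$ for some $k$; writing $\iota_k:E_k\hookrightarrow E$ for the inclusion this gives $\alpha=(\iota_k)_*\beta$ with $\beta\in H_n(E_k)$. Now work over $E_k$ with the (open) cover $\mcV_k=\{\,E_k\cap p^{-1}(U)\mid U\in\mcU_k\,\}$. By the small simplices theorem the subcomplex $C_*^{\mcV_k}(E_k)$ of $\mcV_k$-small chains still computes $H_*(E_k)$, so $\beta$ is represented by a $\mcV_k$-small cycle $z$. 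On each $V=E_k\cap p^{-1}(U)$ the homotopy furnished by the hypothesis is a chain homotopy $D_U:C_*(V)\to C_{*+1}(E)$ with $\partial D_U+D_U\partial=f_\#-\iota_\#$. The heart of the proof is to patch the $D_U$ into a single $D:C_*^{\mcV_k}(E_k)\to C_{*+1}(E)$ with $\partial D+D\partial=f_\#-\iota_\#$ on all $\mcV_k$-small chains; granting this, $f_\#z-\iota_\#z=\partial Dz$ since $z$ is a cycle, and therefore in $H_n(E)$ one has $f_*\alpha=(f|_{E_k})_*\beta=[f_\#z]=[\iota_\#z]=(\iota_k)_*\beta=\alpha$. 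In particular $f_*$ is onto (in fact it is the identity on $H_*(E)$).

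The patching of the $D_U$ is the one nontrivial point, and I would carry it out by the standard Mayer--Vietoris double induction --- on the dimension of chains and on the number of members of $\mcU_k$ needed to support a chain --- exactly as in the original treatments of quasifibrations and homology fibrations by Dold--Thom \cite{DoldThom} and McDuff--Segal \cite{McDuffSegal}. I expect this to be the main obstacle to a clean write-up. It is also the place where nothing geometric can be expected: $h_U$ and $h_V$ agree on $E_k\cap p^{-1}(U\cap V)$ only up to a homotopy of homotopies that need not exist, so the gluing is genuinely homological.

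For the relative statement one observes that $f$ preserves $E'=p^{-1}(B')$, that $\{E_k\cap E'\}$ is a fibrewise filtration of $E'$, and that fibrewiseness of each $h_U$ forces $h_U((V\cap E')\times I)\subseteq E'$; hence all of the above restricts over $B'$, and the chain homotopy $D$ carries $C_*(E')$ into $C_*(E')$ and so descends to the quotient complex $C_*(E)/C_*(E')$. One then runs the same argument with relative cycles (equivalently, applies the absolute case to $E$ and to $E'$ and chases the long exact sequences). When $B$ is a point the covers $\mcU_k$ are trivial, so at each stage one has an honest homotopy $\iota_k\simeq f|_{E_k}:E_k\to E$; given a based $\alpha:S^n\to E$, homotoping it into $E_k$ yields $\alpha\simeq\iota_k\circ\beta$, whence $f\circ\alpha\simeq f|_{E_k}\circ\beta\simeq\iota_k\circ\beta\simeq\alpha$. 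Thus $f$ induces the identity on every $\pi_n(E)$ (up to the usual basepoint bookkeeping), in particular an injection.
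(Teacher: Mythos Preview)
Your argument has a genuine gap at exactly the place you flag as ``the one nontrivial point''. You claim the local chain homotopies $D_U$ can be patched into a single $D:C_*^{\mcV_k}(E_k)\to C_{*+1}(E)$ with $\partial D+D\partial=f_\#-\iota_\#$ on all small chains. If that worked it would prove the stronger statement $f_*=\id$ on $H_*(E)$, and this is \emph{false} in general. Take $p:S^1\times S^1\to S^1$ the first projection, trivial filtration $E_k=E$, cover $S^1$ by two contractible arcs, and let $f(x,y)=(x,xy)$ be the Dehn twist. Over each arc one may write $x=e^{i\theta(x)}$ continuously and set $h((x,y),t)=(x,e^{it\theta(x)}y)$, a fibrewise homotopy from $\id$ to $f$; yet on $H_1$ the induced map is $\left(\begin{smallmatrix}1&0\\1&1\end{smallmatrix}\right)$, surjective but not the identity. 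Hence no such global $D$ exists. The arguments in \cite{DoldThom} and \cite{McDuffSegal} that you invoke compare fibres with homotopy fibres via long exact sequences; they do not glue incompatible chain homotopies, and a bare Mayer--Vietoris induction cannot do so either.

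The paper's proof sidesteps this by never aiming for $f_*=\id$. A class is represented by $\varphi_*(c)$ with $\varphi:K\to E_k$, and one introduces the \emph{base dimension}: the least $j$ for which $p\varphi$ factors through a simplicial map to a $j$-dimensional complex $L$ (no $i$-simplex of $K$ collapsing into $\sk_{j-1}L$). After subdivision each simplex of $K$ lands in some $E_k\cap p^{-1}(U)$; applying the local homotopies over the top simplices of $L$ and gluing at their ends produces a chain in $E$ with boundary $\varphi_*(c)-f_*\varphi_*(c)+\bar\varphi_*(\bar c)$, where the correction cycle $\bar\varphi_*(\bar c)$ has base dimension $j-1$. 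By induction it already lies in $\im f_*$, hence so does $[\varphi_*(c)]$. The correction term is precisely the incoherence of the $h_U$'s, absorbed by the inductive hypothesis rather than cancelled. Your treatment of the relative statement and of the case $B=*$ is fine once the absolute argument is repaired along these lines.
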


\begin{proof}
Let us represent an element of $H_i(E;\bbZ)$ by $\varphi_*(c)$ where $\varphi:K\ra E$ is a map from a finite simplicial complex $K$ into $E$, $c$ is a simplicial cycle on $K$ and we require $K$ to be a union of its $i$-simplices. We will say that $\varphi$ has base dimension $j$ if there exists a factorization
\[\xymatrix{
K \ar[r]^-\varphi \ar[d]_\rho & E \ar[d]^p \\
L \ar[r] & B
}\]
where $L$ is a $j$-dimensional simplicial complex and $\rho$ is a simplicial map which does not map any $i$-simplex into the $(j-1)$-skeleton on $L$. By our assumptions we can assume that $\varphi(K)$ lies in $E_k$ for some $k$ without changing the base dimension. We will prove that $\varphi_*(c)$ is homologous to a singular chain in the image of $f_*$ by an induction on the base dimension $j$.

For $j=0$ the image of $\varphi$ lies in a finite union of fibres and on each $E_k\cap\fib_bp$ we have a homotopy from $\id$ to $f$ giving $[\varphi_*(c)]=[f_*\varphi_*(c)]$.

Now for the induction step. By a subdivision of both $K$ and $L$ we may assume that each simplex of $K$ gets mapped to a subset $E_k\cap p^{-1}(U)$ where the homotopy is defined. For each $j$-simplex $s\in L$ this homotopy provides a fibrewise homotopy $I\times\rho^{-1}(s)\ra p^{-1}(U)$. Giving the union of these the correct orientations and gluing at $\partial I\times\rho^{-1}(s)$ where the homotopies are compatible provides a map $\hat{\varphi}$ from an ($i+1$)-dimensional simplicial complex $\hat{K}$ to $E$. The simplicial cycle on $K$ also yields a simplicial chain on $\hat{K}$ whose boundary gets mapped by $\hat{\varphi}$ to $\varphi_*(c)-f_*\varphi_*(c)+\bar{\varphi}_*(\bar{c})$ where $\bar{c}$ is a simplicial cycle on an $i$-dimensional subcomplex $\bar{K}$ of $\hat{K}$ lying over the ($j-1$)-skeleton of $L$ and $\bar{\varphi}$ is the restriction of $\hat{\varphi}$.

The relative version is the same and the case $B=*$ is straightforward.
\end{proof}

\begin{scorollary}
If the unit map of some path-lifting map for $p:E\ra D^n$ satisfies the conditions from the previous lemma then the inclusion $\iota$ of the fibre $\fib_0p$ into $E$ is a homology equivalence 
which is also injective on all homotopy groups.
\end{scorollary}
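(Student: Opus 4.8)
The plan is to exploit a contraction of the disc $D^n$ together with the given path-lifting map $\ell$ to build an explicit ``retraction up to the unit map $u$'' of $E$ onto $\fib_0p$, and then to play the absolute and relative versions of the previous lemma against one another. Take the linear contraction $h_t(x)=(1-t)x$ of $D^n$ (so $h_0=\id$, $h_1\equiv 0$, and crucially $h_t(0)=0$ for all $t$), and package it as a map $H\colon D^n\ra\map(I,D^n)$ sending $x$ to the path $t\mapsto(1-t)x$ from $x$ to $0$. Define $r\colon E\ra\fib_0p$ by $r(e)=\ell(e,H(p(e)))$; this does land in $\fib_0p$ because $\ell$ is fibrewise and $H(p(e))$ ends at $0$. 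Two elementary observations are then needed. First, $\iota\circ r\colon E\ra E$ is homotopic to $u$: the homotopy $G(e,s)=\ell\bigl(e,\,t\mapsto H(p(e))(st)\bigr)$, which lifts progressively longer initial segments of $H(p(e))$, runs from $u$ at $s=0$ to $\iota\circ r$ at $s=1$ (continuity is the usual exponential-law argument, $I$ being locally compact). Since $h_t(0)=0$, this $G$ is constant, equal to $u$, on $\fib_0p$, so it is a homotopy through maps of pairs $(E,\fib_0p)\ra(E,\fib_0p)$. Second, on the fibre itself $r\circ\iota=u|_{\fib_0p}$ literally (again using $h_t(0)=0$), and this self-map of $\fib_0p$ is exactly the unit map of the path-lifting map on $p|_{\fib_0p}\colon\fib_0p\ra\{0\}$ induced by $\ell$.

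For the homology assertion I would invoke the long exact sequence of the pair $(E,\fib_0p)$. By the previous lemma in its relative form, applied with $B'=\{0\}$ and $E'=\fib_0p$, the map $u_*$ is surjective on $H_*(E,\fib_0p)$. On the other hand $u$ is homotopic, through maps of pairs, to $\iota\circ r$, and $r$ already carries $(E,\fib_0p)$ into $(\fib_0p,\fib_0p)$, whose relative homology vanishes; hence $u_*=\iota_*\circ r_*=0$ on $H_*(E,\fib_0p)$. A homomorphism that is at once zero and surjective forces $H_*(E,\fib_0p)=0$, and the long exact sequence then gives that $\iota_*\colon H_*(\fib_0p)\ra H_*(E)$ is an isomorphism, i.e.\ that $\iota$ is a homology equivalence.

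For injectivity on homotopy groups I would check that the hypotheses of the previous lemma descend from $p$ to $p|_{\fib_0p}\colon\fib_0p\ra\{0\}$: use the fibrewise filtration $E_k\cap\fib_0p$, the trivial cover of the point, and, for each $U\in\mcU_k$ containing $0$, the restriction of the given fibrewise homotopy from $\id$ to $u$ on $E_k\cap p^{-1}(U)$ — this restriction preserves $\fib_0p$ because the homotopy covers $\id_{D^n}$. As the base is now a point, the lemma says $u|_{\fib_0p}=r\circ\iota$ is injective on all homotopy groups; since $r_*\circ\iota_*=(u|_{\fib_0p})_*$ with consistent basepoints ($r(e_0)=u(e_0)$ for $e_0\in\fib_0p$), it follows that $\iota_*$ is injective on all homotopy groups as well. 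The step I expect to require the most care is the bookkeeping in the first paragraph — insisting that the contraction fix $0$ so that $G$ respects the subspace $\fib_0p$, and verifying that the lemma's hypotheses genuinely restrict to the fibre over a point — after which the trick of combining surjectivity of $u_*$ on $H_*(E,\fib_0p)$ with the factorization of $u$ through $(\fib_0p,\fib_0p)$ closes the argument with no further computation.
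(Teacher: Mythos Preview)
Your proof is correct and follows essentially the same route as the paper's own argument: factor $u$ (up to homotopy) through $\fib_0p$ via the contraction of $D^n$ and the path-lifting map, observe that as a self-map of the pair $(E,\fib_0p)$ the unit $u$ is simultaneously surjective on relative homology (by the lemma) and zero (by the factorization), forcing $H_*(E,\fib_0p)=0$; and deduce $\pi_*$-injectivity of $\iota$ from the factorization $u|_{\fib_0p}=r\circ\iota$. The paper compresses all of this into four lines, whereas you spell out the explicit homotopy $G$ respecting the pair and the restriction of the lemma's hypotheses to the fibre over $0$; these are exactly the details the paper leaves implicit, and you have handled them correctly.
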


\begin{proof}
Up to homotopy the unit map $u$ factors through the fibre $\fib_0p$ over $0\in D^n$. Thinking of it as a map $(E,\fib_0p)\ra(E,\fib_0p)$ it is therefore both zero and surjective in homology by the previous lemma. This implies that the inclusion $\fib_0p\hookrightarrow E$ is a homology equivalence. The injectivity of $\iota$ on homotopy groups follows from the factorization $u|_{\fib_0p}:\fib_0p\xlra{\iota}E\lra\fib_0p$.
\end{proof}

We summarize the situation in the following definition.

\begin{sdefinition}
We say that a path-lifting map $\ell$ is \emph{sequentially homotopy unital} if there exists a fibrewise filtration $E_k$ of $E$, a family $\mcU_k$ of covers of $B$ and, for each $U\in\mcU_k$ a fibrewise homotopy
\[h_U:(E_k\cap p^{-1}(U))\times I\ra E\]
from $\id$ to the unit map $u$. We say that the unit homotopies are \emph{coherent} if any two $h_U$, $h_V$ (for $U\in\mcU_k$ and $V\in\mcU_l$) are homotopic as fibrewise homotopies from $\id$ to $u$ on the intersection of their domains.
\end{sdefinition}

\begin{theorem} \label{theorem_recognize_homology_fibrations}
A map $p:E\ra B$ admitting a path-lifting map sequentially homotopy unital is a universal homology fibration.
\end{theorem}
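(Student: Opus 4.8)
The plan is to prove the statement in two stages: first reduce to the case $B=D^n$, then treat that case with the Lemma and the Corollary above. For the reduction, given $\sigma:D^n\ra B$, the path-lifting map $\ell$ induces one, say $\hat\ell$, on the pullback $\hat p:\sigma^*E\ra D^n$ (as noted after the definition of a path-lifting map); concretely $\hat\ell\big((d,e),\gamma\big)=\big(\gamma(1),\ell(e,\sigma\gamma)\big)$ and its unit map is $\hat u(d,e)=(d,u(e))$. I would check that $\hat\ell$ is again sequentially homotopy unital: put $\widehat{E_k}=\sigma^*(E_k)$, which is a fibrewise filtration of $\sigma^*E$ because any fibrewise-over-$B$ homotopy in $E$ that carries a compact family into $E_k$ lifts to a fibrewise-over-$D^n$ homotopy into $\widehat{E_k}$ simply by keeping the $D^n$-coordinate fixed; then the covers $\{\sigma^{-1}(U):U\in\mcU_k\}$ of $D^n$, together with the fibrewise homotopies $\big((d,e),t\big)\mapsto\big(d,h_U(e,t)\big)$ from $\id$ to $\hat u$ (well defined because the $h_U$ are fibrewise over $B$), exhibit $\hat\ell$ as sequentially homotopy unital. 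So it suffices to show that a map $p:E\ra D^n$ admitting a sequentially homotopy unital path-lifting map is a homology fibration.

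For that, fix $b\in D^n$ and let $\delta_x$ be the straight-line path from $x$ to $b$ (which exists for every $b$ since $D^n$ is convex). Running the proof of the Corollary above with the contraction of $D^n$ onto $b$ in place of the one onto $0$ exhibits $u$ as homotopic, through maps of pairs $(E,\fib_bp)\ra(E,\fib_bp)$, to the composite $E\ra\fib_bp\hookrightarrow E$ sending $e$ to $\ell(e,\delta_{p(e)})$, which lands in $\fib_bp$ because $\ell$ is fibrewise. The Lemma preceding the Corollary, applied to $f=u$ with the data supplied by sequential homotopy unitality, gives that $u_*$ is surjective on $H_*(E;\bbZ)$ and, in its relative form with $B'=\{b\}$, surjective on $H_*(E,\fib_bp;\bbZ)$; but the factorization just described makes $u_*$ vanish on the relative group, so $H_*(E,\fib_bp;\bbZ)=0$, i.e.\ $\fib_bp\hookrightarrow E$ is a homology equivalence.

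Finally, since $D^n$ is contractible, $\map_*(I,D^n)$ is contractible and the evaluation $\map_*(I,D^n)\ra D^n$ is a Hurewicz fibration, hence a weak equivalence; pulling back along $p$ shows $\hofib_bp\ra E$ is a weak equivalence. The canonical map $\fib_bp\ra\hofib_bp$ followed by $\hofib_bp\ra E$ is the inclusion $\fib_bp\hookrightarrow E$, a homology equivalence by the previous paragraph, so the clause of the $2$-out-of-$3$ axiom~(2) about a weak equivalence on the right forces $(\fib_bp\ra\hofib_bp)\in\mcH$. As $b\in D^n$ was arbitrary, $p:E\ra D^n$ is a homology fibration, and applying this to $\hat p=\sigma^*p$ for every $\sigma:D^n\ra B$ shows $p$ is a universal homology fibration. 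I expect the only genuinely fiddly step to be the first stage — verifying that all the data witnessing sequential homotopy unitality is inherited by pullbacks, in particular that $\sigma^*(E_k)$ remains a fibrewise filtration; the homological substance is already packaged in the Lemma, so beyond that bookkeeping there is no real obstacle.
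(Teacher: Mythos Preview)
Your proposal is correct and follows essentially the same approach as the paper's proof, which is the one-liner ``Both the path-lifting map and the sequential homotopy pass to all pullbacks and over discs the last corollary applies.'' You have simply unpacked both halves of that sentence: the bookkeeping that transfers the sequentially homotopy unital data to $\sigma^*E$, and the passage from the Corollary's conclusion about $\fib_0p\hookrightarrow E$ to the $\mcH$-fibration condition $\fib_bp\to\hofib_bp\in\mcH$ for every $b$ via the weak equivalence $\hofib_bp\to E$ and axiom~(2).
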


\begin{proof}
Both the path-lifting map and the sequential homotopy pass to all pullbacks and over discs the last corollary applies.
\end{proof}

The next theorem gives a sufficient condition for improving the conclusion of the previous theorem from a universal homology fibration to a universal quasifibration. We need to assume that for each fibre $\fib_bp$ of $p:E\ra B$ the map
\begin{equation} \label{eqn_fundamental_group}
\pi_1(\hocolim(E_k\cap\fib_bp))\xlra{\cong}\pi_1(\fib_bp)
\end{equation}
is an isomorphism. This happens in particular when $\hocolim(E_k\cap\fib_bp)\ra\fib_bp$ is a weak equivalence.

\begin{theorem} \label{theorem_recognize_quasifibrations}
Suppose that $p:E\ra B$ admits a path-lifting map coherently sequentially homotopy unital and the filtration satisfies $(\ref{eqn_fundamental_group})$. Then it is a universal quasifibration.
\end{theorem}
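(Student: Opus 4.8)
The aim is to upgrade Theorem~\ref{theorem_recognize_homology_fibrations} from a universal homology fibration to a universal quasifibration, i.e.\ to promote the relevant $\mcH$-equivalences from homology equivalences to weak homotopy equivalences. The path-lifting map, the coherent unit homotopies, the fibrewise filtration and the isomorphisms $(\ref{eqn_fundamental_group})$ all pass to pullbacks, so — pulling back along an arbitrary $\sigma\colon D^n\to B$ and fixing a point of $D^n$, which for convenience I call $0$ — it suffices to prove that when $p\colon E\to D^n$ carries all of this structure the fibre inclusion $\iota\colon\fib_0p\hookrightarrow E$ is a weak homotopy equivalence. (Since $D^n$ is contractible onto $0$ the projection $\hofib_0p\to E$ is a weak equivalence and $\iota$ is its composite with $\fib_0p\to\hofib_0p$, so this does cover all points of all discs.) By the Corollary above — whose proof uses only a contraction of the disc onto the relevant point — we already know that $\iota$ is an integral homology equivalence and is injective on all homotopy groups; what remains is the control of $\pi_1$ together with the corresponding homological statement on universal covers.

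The idea is to run the proof of that Corollary one level up, over the universal cover $q\colon\widetilde E\to E$; write $\widetilde p=pq\colon\widetilde E\to D^n$. Since $\widetilde E$ is simply connected the unit map $u$ lifts to a fibre-preserving self-map $\tilde u$ of $\widetilde E$; the preimages $\widetilde E_k:=q^{-1}(E_k)$ form a fibrewise filtration of $\widetilde E$ (lift, along the covering $q$, the fibrewise homotopies witnessing that the $E_k$ form a fibrewise filtration of $E$); and each unit homotopy $h_U$ lifts to a fibrewise homotopy on $(\widetilde E_k\cap\widetilde p^{-1}(U))\times I$ starting at the identity. Here the \emph{coherence} hypothesis does real work: two homotopies $h_U,h_V$ that are homotopic relative to their endpoints have lifts that are homotopic relative to their common time-$0$ endpoint, and since a homotopy through lifts of a fixed map is constant on components, the time-$1$ endpoints of the lifts of $h_U$ and $h_V$ must agree on the overlap of their domains; these endpoints therefore glue to a single lift $\tilde u$ of $u$ along which all of the lifted homotopies terminate. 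Finally, just as in the Corollary — contract $D^n$ onto $0$ by a contraction fixing it and push $u$ forward along the resulting paths by means of $\ell$ — the map $u$ is fibrewise homotopic over $D^n$ to a map factoring through $\fib_0p$; lifting that homotopy shows that $\tilde u$ is homotopic, as a map of pairs, to a map landing in $P$, where $P:=q^{-1}(\fib_0p)=\widetilde p^{-1}(0)$.

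Now apply the technical Lemma above, in its relative form for the pair $(\widetilde E,P)$, to the self-map $\tilde u$: it gives that $\tilde u_*$ is surjective on $H_*(\widetilde E,P)$ in every degree. On the other hand $\tilde u$ factors up to homotopy through $P$, so $\tilde u_*=0$ on $H_*(\widetilde E,P)$; hence $H_*(\widetilde E,P)=0$. In low degrees this forces $H_0P\xrightarrow{\cong}H_0\widetilde E$, so $P$ is connected; combined with the $\pi_1$-injectivity of $\iota$ this says precisely that $\iota_*\colon\pi_1(\fib_0p)\to\pi_1(E)$ is an isomorphism (the covering $q|_P\colon P\to\fib_0p$, being the pullback of the universal cover of $E$ along $\iota$, is connected exactly when $\iota$ is $\pi_1$-surjective, and it is then the universal cover of $\fib_0p$). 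This is where $(\ref{eqn_fundamental_group})$ enters: it is what is needed to see that the lifted filtered pieces $\widetilde E_k\cap P$ exhaust $P=\widetilde{\fib_0p}$ in the $\pi_1$-sense demanded when one runs the Lemma one cover up, and hence to obtain the connectedness of $P$ just used. In the positive degrees $H_*(\widetilde E,P)=0$ says that $\widetilde{\fib_0p}=P\hookrightarrow\widetilde E$ is an integral homology equivalence of simply connected spaces, hence a weak homotopy equivalence by the homology Whitehead theorem; it is equivariant for the isomorphism $\pi_1(\fib_0p)\cong\pi_1(E)$, so comparing it with the covering projections $\widetilde{\fib_0p}\to\fib_0p$ and $\widetilde E\to E$ yields that $\iota$ itself is a weak equivalence, which completes the proof.

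The main obstacle is the bookkeeping of the last two paragraphs: making precise that all of the structure — the unit map $u$, the fibrewise filtration, and the \emph{entire} coherent family of unit homotopies — lifts to $\widetilde E$, using coherence to pin down a single well-defined lifted unit map, and checking that the relative form of the technical Lemma genuinely applies to $\widetilde p\colon\widetilde E\to D^n$ with $E'=P$; in particular, locating exactly where $(\ref{eqn_fundamental_group})$ is required so that the lifted filtration restricted to the fibre over the chosen point behaves as the Lemma demands. Once this is in place, the homological input is nothing beyond the Corollary applied one cover up, together with the homology Whitehead theorem.
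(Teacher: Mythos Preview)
Your approach is essentially the paper's: reduce to $B=D^n$, lift the structure to the universal cover $\widetilde E$, run the homology argument there, and deduce that $\fib_0 p\hookrightarrow E$ is a weak equivalence. There are two organizational differences. First, the paper lifts the entire path-lifting map to a $\tilde\ell$ on $\widetilde E$ (not just the unit map), so that the Corollary applies verbatim upstairs; you lift only $u$ together with its homotopies, which is enough to feed directly into the relative Lemma. Second, for the final step the paper quotes Proposition~1.4 of \cite{HH} (a $\pi_1$-injective map that induces isomorphisms in homology with all local coefficients is a weak equivalence), whereas you unpack this by hand via the homology Whitehead theorem on the simply connected covers; your route is slightly more self-contained but amounts to the same thing.

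One correction on where condition~(\ref{eqn_fundamental_group}) enters. It is not a hypothesis needed for the relative Lemma on the pair $(\widetilde E,P)$---that Lemma asks nothing of $P$ beyond $P=\tilde p^{-1}(0)$. Its real role is earlier, in guaranteeing that the piecewise lifts $\tilde h_U(-,1)$ assemble into a \emph{single} globally defined $\tilde u$ on all of $\widetilde E$ (equivalently, that $q^{-1}\bigl(\bigcup_kE_k\bigr)$ is connected so the glued lift extends uniquely). The paper uses it at exactly this point: when extending $\tilde\ell$ to a point $\tilde e$ whose image $e$ lies outside $\bigcup_kE_k$, one chooses a path $\wp$ inside the fibre from $e$ to some $e'\in E_k$ and uses $\wp*h_U(e',-)*(u\circ\wp^{-1})$ in place of $h_U(e,-)$; condition~(\ref{eqn_fundamental_group}) is what makes the resulting homotopy class independent of the choice of $\wp$ and $e'$, so that $\tilde\ell$ (hence $\tilde u$) is well-defined and continuous there.
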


\begin{proof}
As usual reduce to the case $B=D^n$ by pulling back. Consider the universal cover $\tilde{E}$ of $E$, denote the composition $\tilde{E}\ra E\ra D^n$ by $\tilde{p}$ and note that $\fib_b\tilde{p}$ is determined by the following pullback square
\[\xymatrix{
\fib_b\tilde{p} \pb \ar[r] \ar[d] & \tilde{E} \ar[d] \ar[rd]^{\tilde p} \\
\fib_bp \ar[r] & E \ar[r]_p & D^n
}\]
We claim that the structure from the statement lifts to the universal cover, starting with the path-lifting map. Let $\tilde{e}\in\tilde{E}$, $\gamma\in\map(I,D^n)$ and we would like to define $\tilde{\ell}(\tilde{e},\gamma)$. We denote by $e$ the image of $\tilde{e}$ in $E$ and assume that $e\in E_k$. To decide which of the preimages of $\ell(e,\gamma)$ to take for $\tilde{\ell}(\tilde{e},\gamma)$ we consider a concatenation of paths $h_U(e,t)$ and $\ell(e,\gamma|_{[0,t]})$ which is a path that starts at $e$ and finishes at $\ell(e,\gamma)$. By our assumptions although not well-defined its homotopy class rel $\partial I$ is. Therefore the evolution on $\tilde{E}$ starting at $\tilde{e}$ finishes at a well-defined point which will be our $\tilde{\ell}(\tilde{e},\gamma)$. The effect of $\tilde{\ell}$ on points $\tilde{e}$ with image $e$ not in $E_k$ is defined by choosing a path $\wp$ inside one fibre from $e$ to $e'\in E_k$ and applying the previous to $\wp*h_U(e',-)*(u\circ\wp^{-1})$ instead of $h_U(e,-)$. It is easy to check that with this definition $\tilde\ell$ is a well-defined path-lifting map which again admits a sequential homotopy to identity.

Therefore we conclude that $\fib_b\tilde{p}\ra\tilde{E}$ is a homology equivalence or in other words $\fib_bp\ra E$ induces an isomorphism in homology with any local coefficients. As it is also injective on $\pi_1$ 
by the last corollary it must be a weak homotopy equivalence by Proposition~1.4 of \cite{HH}.
\end{proof}

\section{Classifying $\mcH$-fibrations}

Fibre bundles with fibre $F$ are classified by homotopy classes of maps into $B\Aut(F)$ where $\Aut(F)$ denotes the topological group of homeomorphisms (or diffeomorphisms in the fibrewise smooth case) and a similar situation occurs (see \cite{May}) for fibrations where $\Aut(F)$ is now to mean the topological monoid of self homotopy equivalences of $F$. In this section we will try to solve this question for $\mcH$-fibrations. This has two parts: showing that the classifying object exists (under certain conditions; in particular we have to ensure that the classified objects - $\mcH$-fibrations up to certain equivalence relation - form only a set) and identifying it.

In addition to (0)-(4) we will be assuming ($4^+$).

\begin{sdefinition}
We say that a morphism
\[\xymatrix@C=10pt@R=15pt{
E_0 \ar[rr] \ar[dr]_{p_0} & & E_1 \ar[dl]^{p_1} \\
& B
}\]
of $\mcH$-fibrations over the same base $B$ is a \emph{fibre $\mcH$-equivalence} if for each $b\in B$ the induced map $\fib_bp_0\ra\fib_bp_1$ is an $\mcH$-equivalence. We say that two $\mcH$-fibrations are \emph{fibre $\mcH$-equivalent} if there exists a zig-zag of fibre $\mcH$-equivalences connecting them.
\end{sdefinition}

\begin{sdefinition}
We say that universal $\mcH$-fibrations $p_0:E_0\ra B$ and $p_1:E_1\ra B$ over the same base $B$ are \emph{concordant} if there exists a universal $\mcH$-fibration $p:E\ra B\times I$ such that $p_i$ is isomorphic to $p|_{B\times\{i\}}$.
\end{sdefinition}

\begin{remark}
A fibre $\mcH$-equivalence is automatically an $\mcH$-equivalence by the extension axiom but converse is generally not true. The advantage of our choice of an equivalence is that it passes to all pullbacks.
\end{remark}

\begin{slemma}
Universal $\mcH$-fibrations $p_0,p_1$ over the same base $B$ are concordant if and only if they are fibre $\mcH$-equivalent.
\end{slemma}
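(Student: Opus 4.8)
The plan is to prove the two directions separately, with the forward direction (concordant $\Rightarrow$ fibre $\mcH$-equivalent) being the easy one and the reverse direction being the real content. For the forward direction, suppose $p:E\ra B\times I$ is a universal $\mcH$-fibration restricting to $p_0$ and $p_1$ at the two ends. The inclusions $B\times\{i\}\hookrightarrow B\times I$ are homotopy equivalences, and pulling back $p$ along either of them gives $p_i$ (up to isomorphism). I would produce a fibre $\mcH$-equivalence from $p_0$ to $p$ (regarded as a fibration over $B$, say by composing with the projection, or better: restrict $p$ to $B\times\{0\}$ and compare with its restriction via the contraction of $I$). Concretely, choose the contraction of $B\times I$ onto $B\times\{0\}$; since $p$ is a universal $\mcH$-fibration it is an $\mcH$-fibration (Theorem above), and pulling $p$ back along the deformation gives that $p|_{B\times\{0\}}\hookrightarrow E\xra{} B\times I$ compares the two fibres over any point of $B\times I$ by an $\mcH$-equivalence; restricting the zig-zag to the two ends gives a zig-zag of fibre $\mcH$-equivalences from $p_0$ to $p_1$. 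This is routine once one has the observation (Remark above) that $\mcH$-equivalences of the total spaces that are fibrewise become fibre $\mcH$-equivalences, together with the fact that universal $\mcH$-fibrations pull back to universal $\mcH$-fibrations.

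For the reverse direction, it suffices (by concatenating concordances, using the previous theorem that local-to-global gluing of universal $\mcH$-fibrations holds, so that a concordance over $B\times[0,1]$ glued to one over $B\times[1,2]$ is again one) to treat a single fibre $\mcH$-equivalence $\phi:E_0\ra E_1$ over $B$. So I am given $\phi$ with $p_1\phi=p_0$ and each $\fib_b\phi$ an $\mcH$-equivalence, and I want a universal $\mcH$-fibration over $B\times I$ interpolating. The natural candidate is the \emph{mapping cylinder of $\phi$ taken fibrewise over $B$}: set $E=(E_0\times I)\sqcup E_1/\!\sim$, with $(e,1)\sim\phi(e)$, mapping to $B\times I$ by $(e,t)\mapsto(p_0(e),t)$ on the cylinder part and $e_1\mapsto(p_1(e_1),1)$ on $E_1$. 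One checks $p|_{B\times\{0\}}\cong p_0$ and $p|_{B\times\{1\}}\cong p_1$ on the nose. The work is to show $p:E\ra B\times I$ is a universal $\mcH$-fibration.

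The key step, and the main obstacle, is verifying that the fibrewise mapping cylinder $p:E\ra B\times I$ is a universal $\mcH$-fibration. Here I would use Corollary~\ref{corollary_main}: it suffices to show $p$ is an $\mcH$-fibration and that this is stable under pullbacks; but the mapping-cylinder construction manifestly commutes with pullback in $B$ (and pulling back along $D^n\ra B\times I$ factors through a pullback along $D^n\ra B$ composed with a cylinder coordinate), so stability is automatic once we know $p$ itself is an $\mcH$-fibration whenever $p_0,p_1$ are universal $\mcH$-fibrations and $\phi$ is a fibre $\mcH$-equivalence. To see $p$ is an $\mcH$-fibration I fix $(b,t)\in B\times I$ and must show $\fib_{(b,t)}p\ra\hofib_{(b,t)}p$ lies in $\mcH$. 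For $t<1$ the fibre of $p$ over $(b,t)$ is just $\fib_bp_0$, and a deformation-retraction argument (the cylinder $B\times[0,1)$ deformation-retracts onto $B\times\{0\}$ compatibly with $p$, since on that portion $E$ is just $E_0\times[0,1)$) identifies the homotopy fibre over $(b,t)$ with $\hofib_bp_0$; so the required map is the one coming from $p_0$ being an $\mcH$-fibration. For $t=1$ the topological fibre is $\fib_bp_1$ while the homotopy fibre can be analysed via the $\mcH$-pullback/extension machinery: the fibrewise mapping cylinder over $B$ of the $\mcH$-equivalence $\phi$ restricts over $\{b\}$ to the ordinary mapping cylinder of $\fib_b\phi:\fib_bp_0\ra\fib_bp_1$, which deformation-retracts onto $\fib_bp_1$; combining with the identification of $\hofib_{(b,1)}p$ through the gluing/extension axioms (Corollary~\ref{corollary_gluing_property} and axiom (3)) gives that $\fib_{(b,1)}p\ra\hofib_{(b,1)}p$ is built out of $\fib_b\phi\in\mcH$, the map $\fib_bp_1\ra\hofib_bp_1\in\mcH$, and weak equivalences, hence lies in $\mcH$ by the 2-out-of-3 axiom (2). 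The delicate point I expect to spend the most care on is making the homotopy-fibre computation at $t=1$ precise and uniform in $b$ — i.e., organizing it as a statement about the map $E\ra B\times I$ over a neighbourhood of $B\times\{1\}$ rather than point by point — so that one genuinely gets an $\mcH$-fibration and not merely a pointwise condition; here I would lean on Theorem~\ref{theorem_local_to_global_hurewicz} to reduce to contractible opens in $B\times I$ and then on the homotopy colimit axiom (4).
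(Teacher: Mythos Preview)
Your forward direction is fine and matches the paper's: the inclusions $E_i\hookrightarrow E$ over $B$ are fibre $\mcH$-equivalences connecting $p_0$ to $p_1$ through the composite $E\ra B\times I\xra{\pr}B$.

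For the reverse direction your candidate concordance (the fibrewise mapping cylinder) is correct, but your argument that it is a \emph{universal} $\mcH$-fibration has a real gap. You write that ``pulling back along $D^n\ra B\times I$ factors through a pullback along $D^n\ra B$ composed with a cylinder coordinate, so stability is automatic once we know $p$ itself is an $\mcH$-fibration''. This is not right: a map $\sigma=(\sigma_B,\tau):D^n\ra B\times I$ yields $\sigma^*E$ as the pullback of the mapping cylinder of $\sigma_B^*\phi$ (over $D^n\times I$) along $(\id,\tau):D^n\ra D^n\times I$, and that second pullback is \emph{not} again a mapping cylinder. Since $\mcH$-fibrations are not stable under pullback (that is the whole point of introducing the universal notion), knowing that mapping cylinders are $\mcH$-fibrations does not give you that $\sigma^*E\ra D^n$ is one. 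Your pointwise computation of $\fib_{(b,t)}p\ra\hofib_{(b,t)}p$ establishes only that $p$ is an $\mcH$-fibration over $B\times I$, not universality; and the proposed rescue via Theorem~\ref{theorem_local_to_global_hurewicz} would need you to produce, on the pulled-back space $\sigma^*E\ra D^n$, a complete cover by contractible opens over which it is an $\mcH$-fibration --- near points of $\tau^{-1}(1)$ this is exactly the difficulty you are trying to circumvent.

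The paper's proof handles this by working directly with the pullback $\hat E\ra D^n$ and exhibiting it as a homotopy pushout
\[
\xymatrix{\hat E_0|_A \ar[r]^{\mcH} \ar[d] & \hat E_1|_A \ar[d] \\ \hat E_0 \ar[r] & \hat E \hpo}
\]
where $A\subseteq D^n$ is the preimage of $B\times[1/2,1]$ (the paper arranges the concordance as $(E_0\times[0,1/2])\cup(M_f\times[1/2,1))\cup(E_1\times\{1\})$ precisely to make this pushout description transparent). The top map is an $\mcH$-equivalence because $\phi$ is a fibre $\mcH$-equivalence and $p_0,p_1$ are universal; then left properness $(4^+)$ --- which is explicitly assumed in this section and which you never invoke --- gives that $\hat E_0\ra\hat E$ is an $\mcH$-equivalence. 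One then runs $2$-out-of-$3$ on $\fib_0\hat p_0\ra\hat E_0\ra\hat E$ together with the identification of $\fib_0\hat p$ with a fibre of $p_0$ or $p_1$. The missing ingredient in your argument is exactly this use of $(4^+)$ on a pushout decomposition of the \emph{pulled-back} total space.
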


\begin{proof}
In one direction a concordance gives fibre $\mcH$-equivalences at the top row 
\[\xymatrix@R=20pt{
E_0 \corner{r}{dr}{dd} \ar[r] \ar[dd]_{p_0} & E \ar[d]^p & E_1 \corner{l}{dl}{dd} \ar[l] \ar[dd]^{p_1} \\
& B\times I \ar[d]|*+<2pt>{\scriptstyle pr} \\
B \ar@{c->}[ur]^{i_0} \ar@{=}[r] & B & B \ar@{=}[l] \ar@{d->}[ul]_{i_1}
}\]
connecting $p_0$ with $p_1$ through a universal $\mcH$-fibration $E\ra B$. In the opposite direction if $f:E_0\ra E_1$ is a fibre $\mcH$-equivalence then consider
\[p:E=(E_0\times[0,1/2])\cup(M_f\times[1/2,1))\cup(E_1\times\{1\})\lra B\times I\]
where we denote $M_f=(E_0\times I)\cup_f E_1$ the mapping cylinder of $f$. The claim is that $p$ is a universal $\mcH$-fibration and hence a concordance between $p_0$ and $p_1$. Taking any pullback $\hat{E}$ along a map from $D^n$ one observes that it sits in the following homotopy pushout square
\[\xymatrix{
& \hat{E}_0|_A \ar[r]^\mcH \ar[d] & \hat{E}_1|_A \ar[d] \\
\fib_0\hat{p}_0 \ar[r]_\mcH & \hat{E}_0 \ar[r]_\mcH & \hat{E} \hpo
}\]
with $A\subseteq D^n$ some closed subspace (namely the preimage of $B\times[1/2,1]$). Both maps at the bottom are $\mcH$-equivalences, the second one by ($4^+$). As the fibre $\fib_0\hat{p}$ is homotopy euivalent to a fibre of either $E_0$ or $E_1$ the map $\fib_0\hat{p}_0\ra\fib_0\hat{p}$ is also an $\mcH$-equivalence. By 2-out-of-3 property $\hat{E}$ is an $\mcH$-fibration.
\end{proof}

\begin{sassumption}
We assume that for every choice of a base $B$ and a fibre $F$ there exists \emph{only a set} of fibre $\mcH$-equivalence classes of universal $\mcH$-fibrations over $B$ with fibre $\mcH$-equivalent to $F$.
\end{sassumption}

Consider the functor $\Hfib_F:\Top_*^\op\ra\Set_*$ sending a pointed topological space $B$ to the set of fibre $\mcH$-equivalence classes of universal $\mcH$-fibrations $p:E\ra B$ over $B$ equipped with an $\mcH$-equivalence $F\ra\fib_*p$ which we require to be preserved by the fibre $\mcH$-equivalences. This is the first main theorem of this section.

\begin{theorem}
Under the above assumption the functor $\Hfib_F$ is representable on the homotopy category of pointed (spaces homotopy equivalent to) CW-complexes. 
\end{theorem}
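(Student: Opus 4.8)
The plan is to obtain representability from Brown's representability theorem. Once we know that $\Hfib_F$ is a pointed, set-valued, homotopy-invariant functor satisfying the wedge and Mayer--Vietoris axioms, representability on the homotopy category of pointed CW complexes follows formally (and extends automatically to spaces homotopy equivalent to CW complexes). The natural basepoint of $\Hfib_F(B)$ is the class of the trivial fibration $F\times B\ra B$ equipped with the identity $F$-structure: the inclusion of each of its fibres into the corresponding homotopy fibre is a homotopy equivalence, and its pullbacks are again trivial fibrations, so it is a universal $\mcH$-fibration, and the assignment is natural in $B$. That $\Hfib_F(B)$ is a set, not a proper class, is precisely the content of the standing Assumption, since for a fixed underlying fibration the $F$-structures form a set of maps $F\ra\fib_*p$. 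We shall repeatedly use one observation: over a contractible base every universal $\mcH$-fibration is fibre $\mcH$-equivalent to a trivial fibration --- replace it up to weak equivalence by a Hurewicz fibration $\hat p$, which is again a universal $\mcH$-fibration and, since the original map is itself an $\mcH$-fibration over a contractible base (being universal), is joined to it by a fibre $\mcH$-equivalence; then trivialise $\hat p$ by the covering homotopy property.

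Homotopy invariance is immediate. If $g_0\simeq g_1\colon B'\ra B$ via a pointed homotopy $G\colon B'\times I\ra B$ and $p$ is a universal $\mcH$-fibration over $B$ with a chosen $F\ra\fib_*p$, then $G^*p$ is a universal $\mcH$-fibration over $B'\times I$ (universal $\mcH$-fibrations being closed under pullbacks) whose restrictions to the two ends are $g_0^*p$ and $g_1^*p$ and whose fibre over $\{*\}\times I$ is constantly $\fib_*p$ with the given $F$-structure. Hence $g_0^*p$ and $g_1^*p$ are concordant as $F$-structured universal $\mcH$-fibrations, so fibre $\mcH$-equivalent by the concordance Lemma, and $[g_0^*p]=[g_1^*p]$ in $\Hfib_F(B')$. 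Thus $\Hfib_F$ descends to the homotopy category.

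For the wedge axiom, surjectivity of $\Hfib_F(\bigvee_\alpha B_\alpha)\ra\prod_\alpha\Hfib_F(B_\alpha)$ is proved by gluing. Given universal $\mcH$-fibrations $p_\alpha$ with $F$-structures, use the observation above and a collar (mapping-cylinder) rigidification to replace each $p_\alpha$, without changing its class, by one that is literally the trivial fibration near the basepoint of $B_\alpha$. The obvious glued map $\bigcup_\alpha E_\alpha\ra\bigvee_\alpha B_\alpha$ then carries an evident $F$-structure, and it is a universal $\mcH$-fibration: apply the local-to-global principle for universal $\mcH$-fibrations to the cover of $\bigvee_\alpha B_\alpha$ by the open subsets of the $B_\alpha\setminus\{*\}$ together with a contractible neighbourhood of the wedge point over which the map is trivial. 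Injectivity is proved by the same rigidify-and-glue argument applied to a zig-zag of fibre $\mcH$-equivalences over each summand.

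The Mayer--Vietoris axiom is the crux, and the step I expect to be the real obstacle. For a CW triad $(X;A,B)$ with $C=A\cap B$ and classes over $A$ and $B$ agreeing over $C$, represent them by universal $\mcH$-fibrations $p_A,p_B$; by the concordance Lemma the agreement over $C$ is realised by a concordance $Q\ra C\times I$. Spreading $Q$ along a collar --- passing to $A'=A\cup_C(C\times I)\simeq A$ and extending $p_A$ by $Q$ --- rigidifies the data so that the resulting universal $\mcH$-fibrations over $A'$ and over $B$ literally coincide on $A'\cap B=C\times\{1\}$; glue them to a map $p_X\colon E_X\ra X$ with its evident $F$-structure, restricting to the prescribed classes. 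The essential remaining point is that $p_X$ is a universal $\mcH$-fibration: cover $X$ by open sets refining $\{A',B\}$ together with a collar neighbourhood of $C\times\{1\}$, verify the claim over the collar exactly as in the proof of the concordance Lemma (gluing along a closed subspace and using the $2$-out-of-$3$ axiom together with $(4^+)$), and conclude with the local-to-global principle. All the point-set bookkeeping --- the collars, the neighbourhoods, and the verification of universality near the overlap --- is where the genuine work lies; granting it, Brown's theorem produces the representing object.
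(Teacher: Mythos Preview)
Your approach is essentially the paper's: verify the hypotheses of Brown representability, with homotopy invariance coming from the concordance Lemma and the gluing axiom from spreading concordances along a collar. The only substantive difference is the form of the gluing axiom you check: you verify the Mayer--Vietoris condition for a CW triad $(X;A,B)$, whereas the paper checks exactness of $\Hfib_F(B\cup_f CA)\to\Hfib_F(B)\to\Hfib_F(A)$ along a mapping cone. These are equivalent formulations; the paper's is marginally cleaner because one side of the glue is the \emph{trivial} fibration, so the only verification needed is that attaching a concordance to $p$ over $B\cup_f(A\times I)$ yields a universal $\mcH$-fibration trivial near $A\times\{1\}$, which then passes to the cone. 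One small point you omit: Brown's theorem as cited gives a representing object on \emph{connected} pointed CW-complexes; the paper observes that $\Hfib_F(S^0)=*$ (any universal $\mcH$-fibration over a point is fibre $\mcH$-equivalent to $F\to *$), whence the representing object is connected and representability extends to all pointed CW-complexes.
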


\begin{proof}
We will verify the assumptions of the Brown representability theorem. By definition pullbacks of a fixed universal $\mcH$-fibration along homotopic maps are concordant and hence fibre $\mcH$-equivalent. In particular any universal $\mcH$-fibration over $B\times I$ is fibre $\mcH$-equivalent to a pullback along the projection $\pi:B\times I\ra B$. Therefore
\[\pi^*:\Hfib_F(B)\ra\Hfib_F(B\times I)\]
is a bijection. This formally implies homotopy invariance. As every CW-complex is well-pointed we may assume that all universal $\mcH$-fibrations are trivialized \emph{near} the basepoint. This easily implies the product axiom. It remains to verify, for a map $f:A\ra B$, exactness of
\[\Hfib_F(B\cup_fCA)\ra\Hfib_F(B)\ra\Hfib_F(A).\]
Therefore let $p:E\ra B$ be a universal $\mcH$-fibration which is concordant to $A\times F\ra A$ over $A$. Gluing this concordance to $p$ provides a universal $\mcH$-fibration over $B\cup_f(A\times I)$ which is trivial over (a neighbourhood of) $A\times\{1\}$ and therefore passes to the mapping cone.

Since the representing object is easily connected by $\Hfib_F(S^0)=*$ the representability extends to all CW-complexes, see \cite{Brown}.
\end{proof}

\begin{sdefinition}
We denote the representing object by $B_\mcH^F$.
\end{sdefinition}

We would like to identify $B_\mcH^F$ as the classifying space of 
a certain monoid of self-maps which represents the connected component $\mcH_F$ of $\mcH$ containing $F$. We make the following assumption which is often met.

\begin{sassumption}
Instead of the previous assumption we require now (a stronger condition as we will see) that there exists a small category $\hat\mcH_F$ together with a homotopy terminal functor $\hat{\mcH}_F\ra\mcH_F$ so that all the homotopy colimits over $\mcH_F$ exist and can be computed by passing to $\hat{\mcH}_F$.\footnote{In fact $B_\mcH^F$ should be the classifying space $B\mcH_F$ of $\mcH_F$ itself and the total space of the universal universal $\mcH$-fibration $\hocolim\limits_{F'\in\mcH_F}F'$. It seems that this is indeed the case whenever the classifying space and the homotopy colimit make sense (and this explains our assumptions on the existence of $\hat\mcH_F$ which is to replace the large category $\mcH_F$) but we will not prove it in this paper. This is also related to our assumption that $\Hfib_F$ should form a set.} To make things easier we pretend that $\mcH_F$ is small itself.
\end{sassumption}

Under this smallness assumption we are able to construct localizations (here $\mcH$ is thought of as the class of local equivalences): let $F_\mcH$ be the homotopy colimit $\hocolim\limits_{F\ra F'}F'$ with $F\ra F'$ running over the comma category $F\da\mcH$ (or rather $F\da\hat\mcH$). We denote the component of the universal cone corresponding to $\id_F$ by $\ell_F$. By (4) it is an $\mcH$-equivalence. Since every $\mcH$-equivalence $f:F\ra F'$ induces a functor $f^*:F'\da\mcH\ra F\da\mcH$ we obtain a map on the homotopy colimits as in the diagram
\[\xymatrix{
F_\mcH & F'_\mcH \ar[l]_{f^\mcH} \\
F \ar[r]_f \ar[u]^{\ell_F} & F' \ar[lu] \ar[u]_{\ell_{F'}}
}\]
with the lower triangle homotopy commutative (being part of the universal cone to the hocolim) and the upper strictly commutative. The top map is a weak equivalence since $f^*$ is homotopy terminal: the comma category $(F\ra F'')\da f^*$ is equivalent to $\tilde F\da\mcH$ for $\tilde F$ the pushout of $F'\la F\ra F''$. A space $F$ is called $\mcH$-local if the localization map $\ell_F$ is a weak equivalence (or equivalently if all the $\mcH$-equivalences from $F$ admit a right homotopy inverse). Easily any $\mcH$-equivalence between $\mcH$-local spaces is a weak equivalence. Applying the construction above to the localization map $\ell_F:F\ra F_\mcH$ we get
\[\xymatrix{
F_\mcH & (F_\mcH)_\mcH \ar[l]_\sim \\
F \ar[r]_{\ell_F} \ar[u]^{\ell_F} & F_\mcH \ar[lu] \ar[u]_{\ell_{F_\mcH}}
}\]
The diagonal map is homotopic to $\id_{F_\mcH}$ by a homotopy $F_\mcH\ra\map(I,F_\mcH)$ that is constructed from the universal property of the homotopy colimit
. Hence $\ell_{F_\mcH}$ is a weak equivalence and therefore $F_\mcH$ is $\mcH$-local.

\begin{slemma}
Let $B$ be a topological space of a homotopy type of a CW-complex. Then every class in $\Hfib_F(B)$ is represented by a fibration $p:E\ra B$ with an $\mcH$-local fibre. Two such fibrations lie in the same class if and only if they are fibre weak homotopy equivalent.
\end{slemma}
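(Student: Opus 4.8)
The plan is to obtain the $\mcH$-local representatives by \emph{fibrewise $\mcH$-localization}, leaning on the facts recalled just before the lemma that $(-)_\mcH$ inverts $\mcH$-equivalences and that an $\mcH$-equivalence between $\mcH$-local spaces is already a weak equivalence. First I would make a harmless reduction. Given a class in $\Hfib_F(B)$ represented by a universal $\mcH$-fibration $p\colon E\to B$ with an $\mcH$-equivalence $\iota\colon F\to\fib_* p$, I replace $p$ by the Hurewicz fibration $\hat p\colon\hat E\to B$ built from the mapping path space, so that $\fib_b\hat p=\hofib_b p$ and the canonical map $j\colon E\to\hat E$ is a homotopy equivalence over $B$. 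Both $p$ and $\hat p$ are $\mcH$-fibrations ($p$ because universal $\mcH$-fibrations are $\mcH$-fibrations, $\hat p$ because for a Hurewicz fibration $\fib_b\to\hofib_b$ is a homotopy equivalence, hence in $\mcH$ by~(0)), and on each fibre $j$ is the canonical inclusion $\fib_b p\hookrightarrow\hofib_b p$, which lies in $\mcH$ since $p$ is an $\mcH$-fibration; so $j$ is a fibre $\mcH$-equivalence, and equipping $\hat p$ with $F\xrightarrow{\iota}\fib_*p\hookrightarrow\fib_*\hat p$ the pair $(\hat p,F\to\fib_*\hat p)$ represents the \emph{same} class. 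Since moreover every Hurewicz (even Serre) fibration is automatically a universal $\mcH$-fibration — its pullbacks are again fibrations, on whose fibres $\fib_b\to\hofib_b$ is a weak equivalence — I may assume throughout that the representative is a Hurewicz fibration. (If $B$ merely has the homotopy type of a CW-complex I first transport everything along a homotopy equivalence to an actual CW-complex.)

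For part~(1) I would apply fibrewise $\mcH$-localization to $\hat p$. Because $X\mapsto X_\mcH$ with $\ell_X\colon X\to X_\mcH$ is (after the routine rectification to an honest functor) a coaugmented homotopy-idempotent functor inverting precisely the $\mcH$-equivalences, the standard construction over a CW-base produces a Hurewicz fibration $\bar p\colon\bar E\to B$ together with a map $\hat E\to\bar E$ over $B$ that on the fibre over each $b$ is $\ell_{\fib_b\hat p}\colon\fib_b\hat p\to(\fib_b\hat p)_\mcH$. Then every fibre of $\bar p$ is $\mcH$-local and $\hat E\to\bar E$ is a fibre $\mcH$-equivalence, so equipping $\bar p$ with the composite $\mcH$-equivalence $F\to\fib_*\hat p\xrightarrow{\ell}(\fib_*\hat p)_\mcH=\fib_*\bar p$ into an $\mcH$-local space, $\bar p$ is a representative of the original class with $\mcH$-local fibres, as required. (Alternatively one could localize the chosen representing universal $\mcH$-fibration over $B_\mcH^F$ once and pull back, using Corollary~\ref{corollary_main}.) The step I expect to be the real obstacle is exactly the construction of fibrewise $\mcH$-localization with these properties: either one cites \cite{Farjoun}, or one redoes the cell-by-cell construction and checks it goes through using only (0)--(4) and~($4^+$).

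For part~(2) I read ``fibre weak homotopy equivalent'' as: joined by a zig-zag of maps of fibrations over $B$, each a weak homotopy equivalence on every fibre and compatible with the structure maps from $F$. One implication is immediate: weak homotopy equivalences lie in $\mcH$, so such a zig-zag is a zig-zag of fibre $\mcH$-equivalences between universal $\mcH$-fibrations, hence witnesses that $p_0$ and $p_1$ lie in the same class. Conversely, if $p_0,p_1$ (Hurewicz fibrations with $\mcH$-local fibres) lie in the same class, they are joined by a zig-zag $p_0=q_0\to q_1\leftarrow\cdots\to q_n=p_1$ of fibre $\mcH$-equivalences of universal $\mcH$-fibrations compatible with the $F$-data; replacing each $q_j$ by a Hurewicz fibration and applying fibrewise $\mcH$-localization functorially to the whole zig-zag yields fibrations $\bar q_0\to\bar q_1\leftarrow\cdots\to\bar q_n$ with $\mcH$-local fibres whose maps, on each fibre, are obtained by applying $(-)_\mcH$ to an $\mcH$-equivalence of (possibly non-local) spaces. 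By naturality of the coaugmentation and the 2-out-of-3 axiom these fibrewise maps are $\mcH$-equivalences, and since their source and target are $\mcH$-local they are weak homotopy equivalences, so this is a zig-zag of fibre weak homotopy equivalences compatible with the $F$-data. Finally, as $p_0$ already has $\mcH$-local fibres the localization map $p_0\to\bar p_0$ is $\ell_{\fib_b p_0}$ on each fibre, hence a weak homotopy equivalence, i.e.\ itself a fibre weak homotopy equivalence compatible with the $F$-data, and likewise for $p_1$; splicing $p_0\to\bar p_0=\bar q_0\to\cdots\to\bar q_n=\bar p_1\leftarrow p_1$ exhibits $p_0$ and $p_1$ as fibre weak homotopy equivalent, which completes the plan.
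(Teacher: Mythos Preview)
Your overall strategy---fibrewise $\mcH$-localization to produce local representatives, then the fact that $\mcH$-equivalences between $\mcH$-local objects are weak equivalences to handle the uniqueness---is exactly the paper's strategy, and your treatment of part~(2) is a correct fleshing-out of something the paper leaves implicit.

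The difference lies in how the fibrewise localization is built, and here the paper is more careful than your sketch. You treat $(-)_\mcH$ as a covariant coaugmented endofunctor on $\Top$ (``after the routine rectification to an honest functor'') and either cite \cite{Farjoun} or gesture at a cell-by-cell construction. But the paper's $(-)_\mcH$ is only defined on $\mcH$-equivalences, and there it is \emph{contravariant}: an $\mcH$-equivalence $f\colon F\to F'$ induces $f^\mcH\colon F'_\mcH\to F_\mcH$. There is no claim that it extends to a covariant functor on all of $\Top$, and it is not clear that a Farjoun-style $f$-localization inverting precisely $\mcH$ exists under axioms (0)--(4) and ($4^+$) alone. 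The paper avoids this issue by decomposing $E\simeq\hocolim_{\sigma\in\Delta SB}F_\sigma$ with $F_\sigma=p^{-1}(\sigma)$; since $p$ is a universal $\mcH$-fibration the diagram lands in $\mcH_F$, so the contravariant $(-)_\mcH$ applies and one forms $E^\mcH=\hocolim_{\sigma\in(\Delta SB)^{\op}}(F_\sigma)_\mcH$ over $B(\Delta SB)^{\op}\cong B\Delta SB\simeq B$. The localized diagram now consists of weak equivalences, so Corollary~\ref{corollary_gluing_property} identifies the fibre as $F_\mcH$. This is precisely the ``real obstacle'' you flagged, resolved by exploiting the simplicial decomposition rather than by strictifying the localization functor. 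Your reduction to Hurewicz fibrations and your argument for part~(2) adapt without change to this setup; you would just run the simplicial construction on each term of the zig-zag.
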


\begin{proof}
Let $p:E\ra B$ be a universal $\mcH$-fibration with homotopy fibre $F=\hofib_*p$. Then one can decompose $E\simeq\hocolim\limits_{\sigma\in\Delta SB}F_\sigma$ where $\Delta SB$ is the simplex category of the singular simplicial set $SB$ associated to $B$ and $F_\sigma=p^{-1}(\sigma)$. Since $p$ is a universal $\mcH$-fibration the diagram $F_\sigma$ takes place in $\mcH_F$. Applying the localization as above one gets $E^{\mcH}=\hocolim\limits_{\sigma\in(\Delta SB)^\op}(F_\sigma)_\mcH$, a space over $B(\Delta SB)^\op$.
\[\xymatrix{
\rightbox{E^\mcH={}}{\hocolim\limits_{\sigma\in(\Delta SB)^\op}(F_\sigma)_\mcH} \ar[d] & \hocolim\limits_{\sigma\in\Delta SB}F_\sigma \ar[l]_-\varphi \ar[r]^-\sim \ar[d] & E \ar[d] \\
B(\Delta SB)^\op & B\Delta SB \ar[l]^-\sim \ar[r]_-\sim & B
}\]
The map $\varphi$ is induced on the homotopy colimits by the localization maps $\ell_{F_\sigma}$ and is a fibre $\mcH$-equivalence between local $\mcH$-fibrations. The base map is, under the identification $B(\Delta SB)^\op\cong B\Delta SB$, homotopic\footnote{This absorbs the non-strict naturality of $\ell$.} to identity. Replacing all the vertical maps by fibrations we see that over $B\Delta SB$ our $E$ is equivalent to a fibration with $\mcH$-local fibre as the diagram $(F_\sigma)_\mcH$ consists of weak equivalences and so by Corollary~\ref{corollary_gluing_property} the fibre of $E^\mcH\ra B(\Delta SB)^\op$ is weakly equivalent to $F_\mcH$. Since $B$ has the homotopy type of a CW-complex, $B\Delta SB\ra B$ is a (strong) homotopy equivalence and thus the same holds over $B$.
\end{proof}

We can now prove the second main theorem of this section.

\begin{theorem}
$B_\mcH^F\simeq B\mcE F_\mcH$, the classifying space of the topological monoid of self-homotopy equivalences of (the CW-approximation of) the localization $F_\mcH$. In particular in this case $\Hfib_F$ forms only a set.
\end{theorem}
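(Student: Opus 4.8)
The plan is to reduce, by way of the previous lemma, to the classical classification of Hurewicz fibrations by the classifying spaces of monoids of self-equivalences, due to Stasheff \cite{Stasheff1} and May \cite{May}.

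First I would record that every Hurewicz fibration $q:X\ra B$ is a universal $\mcH$-fibration: the inclusion $\fib_bq\ra\hofib_bq$ is a homotopy equivalence, hence in $\mcH$ by (0), so $q$ is an $\mcH$-fibration, and since pullbacks of Hurewicz fibrations are again Hurewicz fibrations this persists under all pullbacks. Conversely, the previous lemma tells us that for $B$ of the homotopy type of a CW-complex every class of $\Hfib_F(B)$ is represented by a Hurewicz fibration $p:E\ra B$ whose fibre is weakly equivalent to $F_\mcH$, where the distinguished datum $F\ra\fib_* p$ may be taken to be $\ell_F:F\ra F_\mcH$ (an $\mcH$-equivalence by (4)) followed by a weak equivalence $F_\mcH\simeq\fib_* p$; and two such fibrations represent the same class exactly when they are fibre weakly homotopy equivalent. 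So $\Hfib_F(B)$ is identified with the set of based Hurewicz fibrations with fibre weakly equivalent to $F_\mcH$ modulo fibre weak homotopy equivalence.

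Next I would pass to the CW-approximation $\tilde F$ of $F_\mcH$ and upgrade "weak" to "homotopy": over a CW base a fibrewise map of Hurewicz fibrations which is a weak equivalence on each fibre is a fibre homotopy equivalence once the fibres have CW homotopy type (Dold, \cite{Dold}), so CW-approximating the fibres along a zig-zag of fibre weak equivalences turns it into a genuine fibre homotopy equivalence. Hence, for a CW-complex $B$, the set $\Hfib_F(B)$ is naturally identified with the set of fibre homotopy equivalence classes of Hurewicz fibrations over $B$ with fibre $\tilde F$, equipped with a homotopy class of identification of $\fib_* p$ with $\tilde F$ — this is what the localized datum $F\ra\fib_* p$ becomes. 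By the theorem of Stasheff and May this last set is naturally $[B,B\mcE\tilde F]_*$, with $B\mcE\tilde F$ pointed in the standard way, the based version recording exactly the chosen identification over the basepoint.

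Assembling these bijections yields a natural isomorphism $\Hfib_F\cong[-,B\mcE\tilde F]_*$ of functors on the homotopy category of pointed CW-complexes; in particular $\Hfib_F$ is set-valued, and since representing objects are unique up to homotopy equivalence we conclude $B_\mcH^F\simeq B\mcE\tilde F=B\mcE F_\mcH$. I expect the delicate point not to be the homotopy theory of fibrations, which is classical, but the basepoint bookkeeping: one must check that the distinguished $\mcH$-equivalence $F\ra\fib_* p$ really does localize to the identification that $B\mcE\tilde F$ classifies, and that every identification involved is natural in $B$. Here the facts that $\mcH$-equivalences between $\mcH$-local spaces are weak equivalences and that $B\Delta SB\ra B$ is an honest homotopy equivalence — both already exploited in the previous lemma — are what make the naturality rigid enough to push the argument through.
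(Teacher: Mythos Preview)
Your proposal is correct and follows essentially the same route as the paper: reduce via the preceding lemma to fibrations with $\mcH$-local fibre $F_\mcH$ up to fibre weak homotopy equivalence, then invoke May's classification. The paper compresses this into a single sentence citing \cite{May}, whereas you spell out the intermediate steps (Hurewicz fibrations are universal $\mcH$-fibrations, CW-approximation via Dold, basepoint bookkeeping), but there is no genuine difference in strategy.
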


\begin{proof}
This follows from the last lemma since May proves in \cite{May} that $B\mcE F_\mcH$ classifies fibre weak homotopy equivalence classes of fibrations with fibre of the weak homotopy type of $F_\mcH$.
\end{proof}

\end{document}